\DeclareMathOperator{\Ext}{Ext}
\DeclareMathOperator{\Area}{Area}
\DeclareMathOperator{\Mod}{Mod}
\newcommand{\al}{\alpha} 
\newcommand{\be}{\beta}         
\newcommand{\de}{\delta}
\newcommand{\e}{\epsilon}
\newcommand{\ga}{\gamma}
\newcommand{\R}{\mathbb{R}}
\newcommand{\mc}{\mathcal}
\begin{document}

\title{Criteria for the Divergence of pairs of Teichm\"uller geodesics}
\newtheorem{lem}{\textbf{Lemma}}
\newtheorem{thm}{\textbf Theorem}
\newtheorem{cor}{\textbf Corollary}
\newtheorem{prp}{\textbf Proposition}
\newtheorem{df}{\textbf Definition}
\newtheorem{introthm}{\textbf{Theorem}}
\renewcommand{\theintrothm}{\Alph{introthm}}

\author{Anna Lenzhen and Howard Masur}
 \thanks{The second author is 
supported in part by the NSF}

\date{\today}

\begin{abstract}
We study the asymptotic geometry of Teichm\"uller geodesic rays.  We show that, when the transverse measures to the vertical foliations of the quadratic differentials determining two different  rays are topologically equivalent, but are not absolutely continuous with respect to each other,  the rays  diverge in Teichm\"uller space. 
\end{abstract}

\maketitle

\section{Introduction}
Let $S$ be an oriented surface of genus $g$ with $n$ punctures.  We assume $3g-3+n\geq 1$.  Let $\mc T(S)$ denote the Teichm\"uller space of $S$ with the Teichm\"uller metric $d(\cdot,\cdot)$.   A basic question in geometry is to study the long term behavor of geodesics.  In this paper we study the question of when 
a pair of geodesic rays $X_1(t),X_2(t)$, with possibly distinct basepoints, stay bounded distance apart, and when they diverge in the sense that $d(X_1(t),X_2(t))\to \infty$ as $t\to\infty$.  

 Teichm\"uller's theorem implies that a  Teichm\"uller geodesic ray is determined by a 
quadratic differential $q$  at the base point and that there are quadratic differentials $q(t)$ on $X(t)$ along the ray found by stretching along the horizontal trajectories of $q$ and contracting along the vertical trajectories. 

Many cases of the question of divergence of rays are already known.  It is a general principle that the asymptotic behavior of the ray is determined by the properties of the vertical foliation  of $q$.
The first instance is if the  quadratic differentials $q_1,q_2$ defining the geodesic rays $X_1(t),X_2(t)$  are Strebel differentials. This means that  their vertical trajectories are closed and   decompose the surface into cylinders.   In \cite{Ma:1} it was shown that  if the homotopy classes of the cylinders for $q_1$ coincide with those of $q_2$, then
$X_1(t),X_2(t)$ stay bounded distance apart. In particular this showed that
the Teichm\"uller metric was not negatively curved in the sense of
Busemann. A second known case is if the vertical foliations of
$q_1,q_2$ are the same uniquely ergodic foliation.
In that case the rays also stay bounded distance apart (\cite{Ma:2}).

The next possibility is  the vertical foliations of $q_1,q_2$ are topologically equivalent, have a minimal component and  yet are  not uniquely ergodic.  (It is well-known that for any quadratic differential, the vertical trajectories decompose the surface into cylinders and subsurfaces in which every trajectory is dense). In that case in each minimal component there exist a finite number of mutually singular ergodic measures, and any transverse measure is a convex combination of the ergodic measures.   Ivanov (\cite{I}) showed that if the transverse measures of $q_1,q_2$ in these minimal components are absolutely continuous with respect to each other, then the rays stay bounded distance apart. 
In this paper we prove the converse.

Let $q_1,q_2$ be quadratic differentials on $X_1$ and $X_2$ with vertical foliations 
$[F_{q_1}^v,|dx_1|]$ and $[F_{q_2}^v,|dx_2|]$ and  determining rays $X_1(t),X_2(t)$.
Our main result is then 
\begin{introthm}\label{main}
Suppose $F_{q_1}^v$ and $F_{q_2}^v$ are topologically equivalent. Suppose there is a   minimal  component $\Omega$ of the foliations $F_{q_i}^v$  with ergodic measures $\nu_1,\ldots, \nu_p$ and so that restricted to $\Omega$,  $|dx_1|=\sum_{i=1}^p a_i\nu_i$, $|dx_2|=\sum_{i=1}^p b_i\nu_i$ and there is some index $i$ so that either $a_i=0$ and $b_i>0$, or $a_i>0$ and $b_i=0$.  Then the rays $X_1(t)$ and $X_2(t)$ diverge. 
\end{introthm}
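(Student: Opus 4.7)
My plan is to bound the Teichm\"uller distance from below via Kerckhoff's formula,
\[
d(X_1(t), X_2(t)) \geq \tfrac12 \log\frac{\Ext_{X_2(t)}(\gamma)}{\Ext_{X_1(t)}(\gamma)},
\]
by exhibiting, for arbitrarily large $t$, a simple closed curve $\gamma$ that makes the ratio large. Without loss of generality $a_i = 0$ and $b_i > 0$. The key observation is that mutual singularity of the ergodic measures $\nu_1, \ldots, \nu_p$ gives a decomposition of any transversal $I$ to $F_v$ in $\Omega$ into disjoint measurable sets $E_j$ supporting $\nu_j$; in particular $|dx_1|(E_i) = 0$ while $|dx_2|(E_i) = b_i \nu_i(E_i) > 0$. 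This stark difference between the two transverse measures on $E_i$ is what I would exploit.

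I would produce a sequence of simple closed curves $\alpha_n$ by closing up Rohlin towers for the first return map of $F_v$ to a shrinking family of subintervals $J_n \subset E_i$. Because the closing arc lies essentially in $E_i$, one has $v_1(\alpha_n) := i(\alpha_n,[F_v,|dx_1|]) \to 0$, while $v_2(\alpha_n) := i(\alpha_n,[F_v,|dx_2|]) \geq b_i \nu_i(J_n) > 0$. The horizontal intersections $h_k(\alpha_n) := i(\alpha_n,[F_h,|dy_k|])$ are governed by the tower heights and can be tuned through the choice of Rauzy--Veech stage at which $J_n$ is produced.

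Along either ray the flat $q_k(t)$-length of $\alpha_n$ is comparable (in the thick part of $X_k(t)$) to $\max(e^{-t}v_k(\alpha_n), e^{t}h_k(\alpha_n))$, and extremal length is comparable to the square of this. Choosing $t_n \to \infty$ so that on $X_2(t_n)$ the vertical term $e^{-t_n}v_2(\alpha_n)$ dominates the horizontal $e^{t_n}h_2(\alpha_n)$, and simultaneously the total $q_1(t_n)$-length of $\alpha_n$ remains small, one obtains
\[
\frac{\Ext_{X_2(t_n)}(\alpha_n)}{\Ext_{X_1(t_n)}(\alpha_n)} \longrightarrow \infty,
\]
so that $d(X_1(t_n), X_2(t_n)) \to \infty$. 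Lipschitz continuity of $t \mapsto d(X_1(t), X_2(t))$ then promotes this to divergence along the whole ray.

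The main obstacles I anticipate are twofold. First, one must simultaneously arrange all the bounds: the Rauzy stage must be deep enough for $v_1(\alpha_n)$ to be small, yet the resulting growth of the tower heights---and hence of $h_k(\alpha_n)$---must not force $t_n$ to be too small for the vertical--horizontal balance on $X_2(t_n)$ to go the right way. Second, and more delicate, the upper bound $\Ext \lesssim \ell^2$ on $X_1(t_n)$ is only valid when $\alpha_n$ sits in an annular neighbourhood of definite modulus; along a Teichm\"uller ray other curves may become short, so one likely needs to invoke subsurface projections or Minsky's product-regions theorem (in the spirit of Rafi's combinatorial description of Teichm\"uller geodesics) to relate extremal length to the intersection-number data constructed above.
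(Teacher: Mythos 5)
Your scheme has two genuine gaps, and the first of them is exactly where all the work in this theorem lies. The engine of your argument is an upper bound $\Ext_{X_1(t_n)}(\alpha_n)\lesssim |\alpha_n|_{q_1(t_n)}^2$ with uniform constants, and this cannot be taken for granted here: because $F^v_{q_1}$ is minimal and not uniquely ergodic, the ray $X_1(t)$ leaves every compact set of moduli space, so no thick-part hypothesis is available, and a curve of small flat length need not have small extremal length (by Lemma~\ref{lem:ext}, if there is no flat cylinder and the nearest zero is at distance comparable to $|\alpha_n|_{q_1(t_n)}$, the expanding annulus has bounded modulus and the extremal length stays bounded below). To run your comparison one must locate $\alpha_n$ inside a subsurface that is $(\epsilon,\epsilon_0)$-thick for $q_1(t_n)$ with $q_1(t_n)$-area bounded below, while that same subsurface has tiny $q_2(t_n)$-area and short boundary; this is precisely the content of Proposition~\ref{subsurface} (proved via the limit Theorem~\ref{thm:limits} and an ergodic genericity argument), after which Lemmas~\ref{vertical}, \ref{lower} and \ref{area} convert the area discrepancy into a large extremal length ratio, with a separate analysis when the thick piece is a flat cylinder. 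You flag this issue ("subsurface projections or product regions") but do not resolve it, and it is the core of the theorem rather than a technical afterthought. Two smaller points in the same step: the closing interval $J_n$ cannot literally lie in $E_i$, since the $E_j$ are only measurable (typically dense with dense complement); you need a differentiation/density-point argument to choose intervals with $|dx_1|(J_n)/|dx_2|(J_n)\to 0$. Also the scaling is reversed: $i(\alpha,[F^v,|dx|])$ is multiplied by $e^{t}$ along the ray, and the intersection with the horizontal foliation by $e^{-t}$, not the other way around.

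The second gap is the quantifier structure at the end. Even granting all the estimates, you obtain a large ratio only at times $t_n$ chosen to balance the geometry of each $\alpha_n$; this gives $\limsup_{t\to\infty} d(X_1(t),X_2(t))=\infty$, not divergence. The Lipschitz bound $|d(X_1(t),X_2(t))-d(X_1(s),X_2(s))|\leq 2|t-s|$ only controls times within bounded distance of the $t_n$, and in the non-uniquely ergodic setting the gaps between successive good (balanced) times typically grow without bound, so nothing prevents the distance from dipping back down in between. The paper's proof is structured to avoid exactly this: Proposition~\ref{subsurface} takes an \emph{arbitrary} sequence $t_n\to\infty$ and, after passing to a subsequence, produces the thick subsurface and the curve at those very times, which is what permits the contradiction argument (if the distance were bounded along some sequence, one gets a curve violating Kerckhoff's formula~(\ref{eq:Kerck})). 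To repair your approach you would have to show that every sufficiently large $t$ lies in the good window of some $\alpha_n$, an additional and nontrivial requirement.
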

  In particular, this holds when the transverse measures are distinct ergodic measures.

The last possibility is that the 
vertical foliations
of $q_1,q_2$ are not topologically equivalent.  If the
geometric intersection of the vertical foliations is nonzero,   then the rays diverge (\cite{I}). 
We prove 
\begin{introthm}
\label{top}
Suppose $q_1,q_2$ are quadratic differentials such that the vertical
foliations  $[F_{q_1}^v,|dx_1|]$ and $[F_{q_2}^v,|dx_2|]$   are not
topologically equivalent, but $i([F_{q_1}^v,|dx_1|],
[F_{q_2}^v,|dx_2|])=0$. Then the rays $X_1(t))$ and $X_2(t)$ diverge. 
\end{introthm}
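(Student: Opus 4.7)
The approach is to exhibit a simple closed curve $\alpha$ whose extremal length behaves asymptotically very differently along the two rays, and then invoke Kerckhoff's formula
\[ d(X,Y)=\tfrac{1}{2}\log\sup_{\gamma}\frac{\Ext_Y(\gamma)}{\Ext_X(\gamma)} \]
to force $d(X_1(t),X_2(t))\to\infty$. First I argue that the cylinder cores of $F^v_{q_1}$ and $F^v_{q_2}$ cannot agree as sets of isotopy classes. Each $F^v_{q_i}$ decomposes into cylinders and minimal subsurfaces, and its topological equivalence class records the isotopy data of this decomposition together with the topological foliation type on each minimal piece. If the cylinder cores agreed, the complementary regions would match, and on each common minimal subsurface the two restrictions would have vanishing intersection; but two measured minimal foliations on a common subsurface with zero intersection must share leaves --- a transverse crossing would give positive intersection --- so the topological foliation types on each minimal piece would also match, contradicting topological inequivalence. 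After possibly swapping the indices, pick a simple closed curve $\alpha$ that is a cylinder core of $F^v_{q_1}$ but not of $F^v_{q_2}$. Since $w\cdot\alpha\leq F^v_{q_1}$ as measured foliations, where $w>0$ is the width of the cylinder, we obtain $w\cdot i(\alpha,F^v_{q_2})\leq i(F^v_{q_1},F^v_{q_2})=0$, so $i(\alpha,F^v_{q_2})=0$.

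Next I estimate the extremal length of $\alpha$ along each ray. On $X_1(t)$, the curve $\alpha$ is the core of a flat cylinder in $q_1(t)$ whose width has stretched by $e^{t}$ and whose height has contracted by $e^{-t}$; the modulus is therefore comparable to $e^{2t}$, so $\Ext_{X_1(t)}(\alpha)\leq Ce^{-2t}$. On $X_2(t)$, since $\alpha$ is not a cylinder core of $q_2$ and $i(\alpha,F^v_{q_2})=0$, I expect $\Ext_{X_2(t)}(\alpha)$ to be bounded below by a positive constant: the guiding heuristic is that a simple closed curve becomes asymptotically pinched along a Teichm\"uller ray only when it is a cylinder core of the limit vertical foliation. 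Combining these two estimates gives $\Ext_{X_2(t)}(\alpha)/\Ext_{X_1(t)}(\alpha)\geq c\,e^{2t}$, and Kerckhoff's formula yields $d(X_1(t),X_2(t))\geq t+O(1)\to\infty$.

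The main obstacle is the uniform lower bound for $\Ext_{X_2(t)}(\alpha)$. In $q_2$, the curve $\alpha$ is carried by a chain of vertical saddle connections whose $q_2(t)$-length is $e^{-t}L\to 0$, so pinching cannot be ruled out by a flat-length argument alone. What must be shown is that the horizontal stretching in $q_2(t)$ supplies enough transverse area near $\alpha$ that no large-modulus annulus develops around it --- equivalently, $\alpha$ does not become an asymptotic cylinder of $q_2(t)$. Producing explicit annular neighbourhoods of $\alpha$ of uniformly bounded modulus, using the flat geometry of $q_2(t)$ across the saddle connections carrying $\alpha$, is where the bulk of the work lies.
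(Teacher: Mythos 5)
Your overall strategy --- exhibit a curve $\alpha$ with $\Ext_{X_2(t)}(\alpha)/\Ext_{X_1(t)}(\alpha)\to\infty$ and invoke Kerckhoff's formula --- is the paper's strategy, but there are two real problems.

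First, the reduction to ``a cylinder core of $q_1$ that is not a cylinder core of $q_2$'' is not justified. From ``the cylinder cores agree'' it does not follow that ``the minimal regions match'': one foliation can have a minimal domain $\Omega_1$ that the other foliation's critical graph cuts into smaller minimal pieces, with $i(F^v_{q_1},F^v_{q_2})=0$ nevertheless. The paper therefore treats two cases. In the case where a minimal component of $F^v_{q_1}$ fails to be a minimal component of $F^v_{q_2}$, it picks a curve $\beta$ inside that minimal component; since $\beta$ has positive horizontal $q_1$-length, $\Ext_{X_1(t)}(\beta)\geq h_{q_1}(\beta)^2 e^{2t}$ \emph{grows}, and the work is to bound $\Ext_{X_2(t)}(\beta)$ \emph{above}, using a lower bound on $d(A(\beta))$ coming from the shortest saddle connection in the relevant minimal domain of $q_2$. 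This is a different mechanism from the one your choice of $\alpha$ sets up, and your proposal has no argument for it.

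Second, and more seriously, the lower bound you want for $\Ext_{X_2(t)}(\alpha)$ is false, and the route you sketch to close the gap cannot succeed. A uniform positive lower bound does not hold: since $\alpha\subset\Gamma_{q_2}$, its $q_2(t)$-flat length is $|\alpha|_{q_2}e^{-t}\to 0$, so $\alpha$ is short and its extremal length does go to $0$. Moreover, the expanding annulus $A(\alpha)$ around it does \emph{not} have uniformly bounded modulus; by Lemma~\ref{lem:ext} its modulus is $\asymp \log\bigl(d(A(\alpha))/|\alpha|_{q_2(t)}\bigr)$, and since $d(A)$ is universally bounded above while the flat length decays like $e^{-t}$, this modulus grows like $t$. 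Hence $\Ext_{X_2(t)}(\alpha)\asymp 1/t$, not $\Omega(1)$. What actually makes the argument work is not that $\alpha$ stays thick in $q_2(t)$, but that the decay is only logarithmic in the flat length and hence polynomial in $t$, whereas $\Ext_{X_1(t)}(\alpha)\leq Ce^{-2t}$ is exponential. The essential tool you are missing is precisely Lemma~\ref{lem:ext}, which turns the flat length $e^{-t}$ into a $\log$-type, not exponential, modulus estimate. With that in hand, $\Ext_{X_2(t)}(\alpha)/\Ext_{X_1(t)}(\alpha)\gtrsim e^{2t}/t\to\infty$, without any claim of uniform thickness.
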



These theorems together with the previously known results completely
answer the question of divergence of rays.

The outline of the proof of  Theorem~\ref{main} is as follows. In Proposition~\ref{subsurface}, we will show that,  for the flat metrics defined by the quadratic differentials  $q_1(t)$ and $q_2(t)$, for any sufficiently large time $t$, there is a subsurface  
$Y(t)\subset \Omega$ with its area small in one metric and bounded
away from zero in the other, while its boundary is short in both
metrics. This is where we use the assumption that the measures are not
absolutely continuous with respect to each other. We will then apply
Lemma~\ref{vertical} to find a bounded length curve $\gamma(t)\subset
Y(t)$ which is "mostly vertical" with respect to the metric of
$q_1(t)$.  It has  comparable length in the
metric of $q_2(t)$.   Using  the fact that the quadratic differentials give comparable length to $\gamma(t)$ while giving very different areas to $Y(t)$,  Lemma~\ref{area} will allow us  to show that the ratio of the extremal length of $\gamma(t)$  along one ray to the extremal length on the other is large. We then apply Kerckhoff's formula to conclude that the surfaces are far apart in Teichmuller space.

We will also prove 
\begin{introthm}
\label{approx}
 Let $\nu_1,\ldots, \nu_p$ be maximal collection of ergodic measures for a minimal foliation $[F,\mu]$.   Then there is a sequence of multicurves  $\gamma_n=\{\gamma^1_n,\ldots, \gamma^k_n\}$ such that $\gamma^j_n\to [F,\nu_j]$ in $\mc {PMF}$.   
\end{introthm}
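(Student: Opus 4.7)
The plan is to realize the $\nu_j$ via a renormalization process on a transversal to $F$ and read off the approximating multicurves from the towers that appear at each stage. First, pick a short transverse arc $\tau$ to $F$ so that the first return map $T \colon \tau \to \tau$ is an interval exchange transformation on $d$ subintervals $I_1, \ldots, I_d$. Then Lebesgue measure on $\tau$ corresponds to $\mu$, and the ergodic measures $\nu_1, \ldots, \nu_p$ are in bijection with the $p$ ergodic $T$-invariant probability measures on $\tau$. Apply Rauzy--Veech induction to produce a nested sequence of subarcs $\tau \supset \tau_1 \supset \tau_2 \supset \cdots$ together with nonnegative integer cocycle matrices $B_n = A_1 \cdots A_n$; the $j$-th column of $B_n$ records the number of visits of a point of the $j$-th subinterval of $\tau_n$ to each original $I_i$ before its next return to $\tau_n$.

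For each $n$ and each $j$, the subinterval $I_j^{(n)} \subset \tau_n$ sits at the base of a flat tower $R_j^{(n)}$ foliated by arcs of $F$, whose top edge is identified with $I_j^{(n)}$ by the return map. The core loop of $R_j^{(n)}$ is a simple closed curve $\gamma_n^j$ on $S$, and because the towers have pairwise disjoint interiors and their bases partition $\tau_n$, the collection $\{\gamma_n^1, \ldots, \gamma_n^d\}$ is a multicurve. Under the identification of $\mc{MF}$ with the cone of transverse measures on a train track carrying $F$, the class of $\gamma_n^j$ is represented projectively by the $j$-th column of $B_n$.

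Veech's theorem on the Rauzy--Veech cocycle then asserts that the nested cones $B_n \cdot \mathbb{R}_{\geq 0}^d$ shrink down to the cone on the ergodic measures, so the projectivization of the simplex spanned by the columns of $B_n$ converges in $\mc{PMF}$ to the simplex with vertices $[F, \nu_1], \ldots, [F, \nu_p]$. For large $n$ one can therefore pick, among the $d$ columns of $B_n$, $p$ columns whose projective classes are close to the distinct points $[F, \nu_j]$; relabelling them as $\gamma_n^1, \ldots, \gamma_n^p$ gives the desired multicurves with $\gamma_n^j \to [F, \nu_j]$ in $\mc{PMF}$.

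The main obstacle is precisely this last step: establishing that individual extremal columns separately converge to each $[F, \nu_j]$, rather than only that the spanned simplex converges to the convex hull. This is the substantive content of Veech's theorem and depends on primitivity properties of the $A_n$ (together with the fact that each ergodic measure has to appear as the limit of some such extremal direction, since it is an extreme point of the space of transverse measures). The additional requirement that the curves approximating different ergodic measures form a simultaneous multicurve is automatic from the tower construction.
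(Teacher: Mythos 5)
Your proposal takes a genuinely different route from the paper's. The paper deduces Theorem~\ref{approx} from Proposition~\ref{subsurface}: it flows a quadratic differential with vertical foliation $[F,\sum\nu_i]$ along the Teichm\"uller ray, uses the thick--thin decomposition at large times $t_n$ to isolate, for each ergodic $\nu_i$, a thick subsurface $Y_i(t_n)$ whose $q(t_n)$-area concentrates on the $\nu_i$-generic leaves, and then extracts a bounded-length almost $(q(t_n),\delta)$-vertical curve $\gamma_i(t_n)$ there; the second part of Lemma~\ref{vertical} together with the pointwise ergodic theorem then gives $\gamma_i(t_n)\to[F,\nu_i]$ in $\mc{PMF}$. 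You instead work purely on the dynamical side, renormalizing the first-return interval exchange by Rauzy--Veech induction and reading approximating curves off the tower decomposition. Both are legitimate strategies; the paper's is economical because Proposition~\ref{subsurface} was already established for Theorem~\ref{main}, while yours would be self-contained.

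However, your sketch has two real gaps. The first is the one you flag yourself: the fact that $\bigcap_n B_n\cdot\R^d_{\geq 0}$ equals the cone spanned by the ergodic measures (Veech/Masur) does not literally say that, for each $j$, some sequence of columns of $B_n$ converges projectively to $[F,\nu_j]$. That stronger claim is true, but needs a separate argument (for instance: by nestedness the normalized cone sections converge in Hausdorff distance, and every extreme point of the limit of a Hausdorff-convergent sequence of compact convex sets is a limit of extreme points of the approximants); you should state and prove this rather than absorb it into the citation. The second gap is the assertion that the tower ``core loops'' $\gamma_n^1,\ldots,\gamma_n^d$ automatically form a multicurve. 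Each $\gamma_n^j$ must be closed up by a connecting arc inside the transversal $\tau_n$, and the endpoint pairs of distinct towers on $\tau_n$ can be linked, forcing the connecting arcs to cross; moreover the resulting loop is not obviously embedded. You need either a direct combinatorial argument (via the ribbon-graph or zippered-rectangles structure) or to pass to a train track carrying $F$, identify the selected columns with integral transverse measures, and argue that these particular measures can be realized by disjoint simple closed curves. Neither step is routine, and the theorem is not proved until they are supplied.
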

In other words, any two topologically equivalent measured foliations can be approximated by a sequence of multicurves, with possibly different weights. 
  This result settles a question asked by Moon Duchin. 
        
{\bf Acknowledgements. }We would like to thank Moon Duchin,   
Ursula Hamenstadt, and Kasra Rafi   for useful discussions and their interest in this project.  We would also like to thank the referee for numerous helpful suggestions.  We are also grateful to the Mathematical Sciences Research Institute for the support and hospitality during the time this research was conducted.

\section{Background}
\subsection{Measured foliations}
Recall a measured foliation on a surface $S$  consists of  a finite set $\Sigma$ of singular points and a covering of $S\setminus \Sigma$  by open sets  $\{U_\alpha\}$ with charts $\phi_\alpha:U_\alpha\to \R^2$ such that the overlap maps are of the form 
$$(x,y)\to (\pm x+ c,f(x,y)).$$  The leaves of the foliation 
are the lines $x=\text{constant}$. The points $\Sigma$ are $p$-pronged singularities for $p\geq 3$. One allows  single  pronged singularities at the punctures. 
 A measured foliation comes equipped with a transverse invariant measure which in the above coordinates is given by $\mu=|dx|$.  Henceforth we will denote measured foliations by $[F,\mu]$. We will write $F$ to denote a (topological) foliation when we are ignoring the measure.  

For the rest of the paper, a curve  will always mean a simple closed
curve. 
For any homotopy class of simple closed curves $\beta$, let $$i([F,\mu],\beta)=\inf_{\beta'\sim\beta}\int_{\beta'} d\mu.$$
The intersection  with simple closed curves  extends to an
intersection 
function  $$i([F_1,\mu_1],[F_2,\mu_2])$$ on pairs of measured
foliations. 
Thurston's space of measured foliations is denoted $\mc MF$ and the projective space of measured foliations by $\mc PMF$.

Let $\Gamma_F$ denote the compact leaves of $F$ joining singularities. 
It is well-known that
 each component  $\Omega$ of $S\setminus \Gamma_F$ is either 
 an annulus  swept out by closed leaves 
or  a minimal domain in which  every leaf  is dense.

\begin{df}
We say that two foliations $F_1,F_2$ on $S$ are topologically equivalent, and
we write $F_1\sim F_2$, if there is a homeomorphism of $S\setminus \Gamma_{F_1}\to S\setminus \Gamma_{F_2}$ isotopic to the identity which takes the leaves of $F_1$ to the leaves of $F_2$.
\end{df}

 Note that this definition does not refer to the measures. 
\begin{df}
A foliation $[F,\mu]$ in a minimal domain $\Omega$ is  said to be {\em uniquely ergodic} if the   transverse measure $\mu$ restricted to  $\Omega$  is the unique transverse measure of the foliation $F$ up to scalar multiplication. 
\end{df}

More generally, suppose 
$\Omega$ is a minimal component of $[F,\mu]$.  There exist invariant transverse measures
$\nu_1=\nu_1(\Omega),\ldots, \nu_p=\nu_p(\Omega)$  such that 
\begin{itemize}
\item $p$ is bounded in terms of the topology of $\Omega$.
\item $\nu_i$ is ergodic for each $i$.
\item any transverse invariant  measure $\nu$ on $\Omega$ can be written as 
$\nu=\sum_{i=1}^p a_i\nu_i$ for $a_i\geq 0$.

\end{itemize}
Thus the transverse measures are parametrized by a simplex in $\R^p$.
Two foliations, $[F,\mu_1]$ and $[F,\mu_2]$, are absolutely continuous with respect to each other if, when the measures are expressed as a convex combination as above, the indices with positive coefficients are identical. 
Equivalently, they are absolutely continuous with respect to each other if they lie in the same open face of the simplex. 

\subsection{Quadratic differentials and Teichm\"uller rays}
A meromorphic quadratic differential $q$ on a closed Riemann surface $X$ with a finite number of punctures removed is a tensor of the form $q(z)dz^2$ where $q$ is a holomorphic function and $q(z)dz^2$ is invariant under change of coordinates. We allow $q$ to have at most simple poles at the punctures. 
 
As such there is a metric defined by $|q(z)|^{1/2}|dz|$.  
The length of an arc $\beta$ with respect to the metric will be denoted by $|\beta|_q$. There is an area element defined by $|q(z)||dz|^2$. 
We will denote by $\Area_q \Omega$ the area of a subsurface $\Omega\subseteq X$. 

 Away from the zeroes and poles of $q$ there are {\em natural} holomorphic coordinates $z=x+iy$ such that in these coordinates $q=dz^2$.  
 The lines  
$x=\text{constant}$ with transverse measure $|dx|$ define the vertical foliation $[F_q^v,|dx|]$. 
The  lines  $y=\text{constant}$ with transverse measure $|dy|$ define the horizontal measured foliation $[F_q^h,|dy|]$. 
The transverse measure  of an arc $\beta$ with respect to $|dy|$ will be denoted by $v_q(\beta)$ and called the vertical length of $\beta$.  Similarly, we have the  horizontal  length denoted by 
$h_q(\beta)$. The area element in the natural coordinates is given by $dxdy$.

We denote by $\Gamma_q=\Gamma_{F_q^v}$ the vertical critical graph of $q$.
This is  the union of the vertical leaves joining the zeroes of $q$. 

The 
{\em Teichm\"uller space} of
$S$ denoted by  $\mc T(S)$ is the set of equivalence classes of Riemann surface structures $X$ on $S$,  where 
$X_1$ is equivalent to $X_2$ if there is a conformal map $f:X_1\to X_2$ isotopic to the identity on $S$. 
The {\em Teichm\"{u}ller metric} on $\mc T(S)$ is the metric
defined by $$d(X_1,X_2):=\frac{1}{2}\inf_f\{\log
K(f): f:X_1\to X_2 \mbox{\ is homotopic to Id\ }\}$$ where
$f$ is quasiconformal and $$ K(f):=||K_x(f)||_\infty\geq 1 $$
\noindent is the {\em quasiconformal dilatation} of $f$, where
$$K_x(f):=
\frac{|f_z(x)|+|f_{\bar{z}}(x)|}{|f_z(x)|-|f_{\bar{z}}(x)|} $$
\noindent is the {\em pointwise quasiconformal dilatation} at
$x$.

{\em Teichm\"{u}ller's Theorem}  states that, given any $X_1,X_2\in\mc T(S)$, there exists a unique 
(up to translation in the case when $S$ is a torus) 
quasiconformal map $f$, called the {\em Teichm\"{u}ller map}, realizing $d(X_1,X_2)$.   
The Beltrami coefficient $\mu_f:=\frac{\bar{\partial }f}{\partial f}$ is of the form $\mu_f=k\frac{\bar{q}}{|q|}$ for 
a unique unit area quadratic differential $q$ on $X_1$ and some $k$ with $0\leq k<1$.  Define $t$ by $$e^{2t}=\frac{1+k}{1-k}.$$
There is a quadratic differential $q(t)$ on $X_2$ such that in the natural 
local coordinates $w=u+iv$ of $q(t)$ and $z=x+iy$ of $q$ the map $f$ is given by $$u=e^tx\ \ v=e^{-t}y.$$
Thus $f$ expands along the horizontal leaves of $q$  by $e^t$, and contracts along the vertical leaves by $e^{-t}$.   

Conversely, any unit area $q$ on $X$ determines a $1$-parameter family of Teich\"muller maps $f_t$ defined on $X$. Namely $f_t$ has Beltrami differential   $\mu=k\frac{\bar{q}}{|q|}$ where $e^{2t}=\frac{1+k}{1-k}$.  The image surface is denoted by $X(t)$ and $X(t); t\geq 0$ is the {\em Teichm\"{u}ller ray} based at $X$ in the direction of $q$.  On each $X(t)$ we have the quadratic differential $q(t)$. 



\subsection{Extremal length and Annuli}
We recall the notion of extremal length. 
Suppose $X$ is a Riemann surface and $\Gamma$ is a family of arcs on $X$.  Suppose $\rho$ is a conformal metric on $X$.  For an arc $\gamma$, denote by $\rho(\gamma)$ its length and by $A(\rho)$ the area of $\rho$.
\begin{df} $$\Ext_X(\Gamma)=\sup_\rho \frac{\inf_{\gamma\in\Gamma}\rho^2(\gamma)}{A(\rho)},$$ where the sup is over all conformal metrics $\rho$.
\end{df}

We will apply this definition when $\Gamma$ consists of all simple closed curves in a free homotopy class of some $\alpha$. In that case we will write $\Ext_X(\alpha)$. 
It is also worth noting that if $q$ is a unit area quadratic
differential then  $\Ext_X(\alpha)\geq |\alpha|_q^2$  since $q$ gives
a competing metric. (Here again $|\alpha|_q$ denotes the length of the geodesic in the homotopy class of $\alpha$.)

The following formula due to Kerckhoff (\cite{K:1}) is extremely useful in estimating Teichm\"uller distances.
For $X_1,X_2\in\mc T(S)$
\begin{equation}
\label{eq:Kerck}
d(X_1,X_2)=\frac{1}{2}\log\sup_\alpha \frac{\Ext_{X_2}(\alpha)}{\Ext_{X_1}(\alpha)}.
\end{equation}

We will also need the following inequalities, comparing hyperbolic and extremal lengths.  They are given by  Corollary 3 in Maskit\cite{Mas:1}. The first says that 
$$\Ext_X(\alpha)\leq \frac{1}{2}l_\sigma(\alpha)e^{\frac{1}{2}l_\sigma(\alpha)}$$
where $l_{\sigma}(\alpha)$ repsesents the length of the geodesic, in the homotopy class of $\alpha$,  with respect to the hyperbolic metric of $X$. 
The second says that as $l_\sigma(\alpha)\to 0$ we have $$\frac{\Ext_X(\alpha)}{l_\sigma(\alpha)}\to 1.$$

\begin{df}
Suppose there is an embedding of a Euclidean cylinder in $\R^3$ into $X$ which is an isometry with respect to the metric of $q$.  The image is called a flat cylinder.  The cylinder is maximal if it cannot be enlarged. In that case there are singularities on each boundary component of the cylinder.    
\end{df}

We need a definition and estimates found in \cite{CRS}  and \cite{Min:2}.
We first adopt the following notation. If two quantites $a$ and $b$ differ by multiplicative and additive constants that depend only on the topology, then we will often write 
$$a\asymp b.$$

\begin{df} Given a quadratic differential with its metric $q$, an expanding annulus $A$ is an annulus where the curvature of each boundary component has constant sign, either positive or negative at each point, the boundary curves are equidistant and there are no zeroes inside $A$. 
\end{df}
 Let $d(A)$ be the distance between the boundary components of an expanding annulus. 
It is universally bounded. 
 The following statement can be found as Corollary 5.4 of \cite{CRS}.   
\begin{lem}
\label{lem:ext}
Suppose $q$ is a quadratic differential of area $1$ on $X$ with its hyperbolic metric $\sigma$,  and  $\beta$ is a sufficiently short curve.
\begin{enumerate}
\item   
$$\frac{1}{l_\sigma(\beta)}\asymp \max(\Mod(F(\beta),\Mod(A(\beta)),$$ where $F(\beta)$ is the maximal flat cylinder, $A(\beta)$ is the maximal expanding annulus with one boundary component the $q$- geodesic in the class of $\beta$ and 
\item  
$$\Mod(A(\beta))\asymp \log\frac{d(A(\beta))}{|\beta|_q}.$$   
\item the other boundary component of $A(\beta)$ contains a zero of $q$. 
\end{enumerate}
\end{lem}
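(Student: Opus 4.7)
The plan is to derive the three parts of the lemma from the interplay between extremal length, hyperbolic length, and the geometry of embedded annuli in the $q$-metric. Recall that for a simple closed curve, $\Ext_X(\beta) = 1/\Mod(A^*(\beta))$, where $A^*(\beta)$ is the maximum modulus embedded annulus in the homotopy class of $\beta$. Combining this with Maskit's estimate $\Ext_X(\beta)/l_\sigma(\beta) \to 1$ as $l_\sigma(\beta) \to 0$, already recorded above, immediately yields $1/l_\sigma(\beta) \asymp \Mod(A^*(\beta))$, which reduces part (1) to showing $\Mod(A^*(\beta)) \asymp \max(\Mod(F(\beta)), \Mod(A(\beta)))$.

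The $\gtrsim$ direction is automatic, since $F(\beta)$ and $A(\beta)$ are both embedded annuli homotopic to $\beta$. For the $\lesssim$ direction, I would take an arbitrary embedded annulus $A'$ in the homotopy class and decompose it in the $q$-metric along the boundary of the maximal flat cylinder $F(\beta)$. The part lying inside $F(\beta)$ contributes modulus at most $\Mod(F(\beta))$. The remaining pieces lie in complementary regions foliated (outside a bounded collar of the zeroes of $q$) by curves equidistant to the $q$-geodesic, and so are each contained, up to bounded loss, in an expanding annulus with one boundary component on the geodesic. Since there are at most two such complementary pieces and the topology bounds the number of zeroes of $q$, their total modulus is $\asymp \Mod(A(\beta))$, giving the claimed comparison.

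Part (2) is a direct modulus computation. Foliate $A(\beta)$ by equidistant loops $\gamma_s$ at distance $s$ from the inner boundary. Since $A(\beta)$ contains no zeroes of $q$ and the curvature of its boundaries has constant sign, Gauss--Bonnet applied to the annular region between $\gamma_0$ and $\gamma_s$ forces $\ell_q(\gamma_s)$ to be essentially linear in $s$, with $\ell_q(\gamma_s) \asymp |\beta|_q + s$. The modulus is computed by the standard foliation formula
\begin{equation*}
\Mod(A(\beta)) = \int_0^{d(A(\beta))} \frac{ds}{\ell_q(\gamma_s)} \asymp \log\frac{d(A(\beta))}{|\beta|_q},
\end{equation*}
giving the stated estimate. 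Part (3) then follows from maximality: if the outer boundary of $A(\beta)$ contained no zero of $q$, one could push it further outward while preserving equidistance and the absence of zeroes inside, contradicting maximality of $A(\beta)$.

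The main obstacle I anticipate is the decomposition in part (1): one must show rigorously that an arbitrary high modulus embedded annulus in the singular flat metric cannot evade both the maximal flat cylinder and the maximal expanding annulus in a way that inflates its modulus by more than a topologically bounded multiplicative factor. This requires careful control over how embedded annuli interact with the cone singularities of $q$ and with the equidistant foliation of the complement of the zero graph, which is where the topological constants in the $\asymp$ really get absorbed.
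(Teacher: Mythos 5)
The first thing to note is that the paper does not prove this lemma at all: it is quoted verbatim as Corollary~5.4 of \cite{CRS}, so the benchmark is the argument there (and in Minsky's harmonic-maps paper), which runs through explicit test metrics on the flat cylinder and the adjacent expanding annuli rather than through a decomposition of a maximal embedded annulus. Against that benchmark, your parts (2) and (3) are essentially the standard sketch: the length of the equidistant curve $\gamma_s$ grows linearly by Gauss--Bonnet (no zeroes inside, boundary curvature of constant sign), and maximality forces a zero on the outer boundary. One caveat in (2): the ``foliation formula'' $\Mod(A(\beta))=\int_0^{d}\frac{ds}{\ell_q(\gamma_s)}$ is not an identity; the lower bound comes from superadditivity of moduli over the nested equidistant sub-annuli, and the upper bound requires a test metric such as $\rho=|q|^{1/2}/\ell_q(\gamma_s)$ together with a lower bound on the $\rho$-length of every essential curve in $A(\beta)$. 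As a two-sided estimate this is fine, but it needs to be said.

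The genuine gap is in part (1), in exactly the place you flag yourself. The easy direction is $1/\Ext_X(\beta)\gtrsim\max(\Mod(F(\beta)),\Mod(A(\beta)))$; the content of the lemma is the reverse inequality, and your route to it --- cut an arbitrary embedded annulus $A'$ along $\partial F(\beta)$ and absorb the outside pieces into $A(\beta)$ ``up to bounded loss'' --- is not an argument as it stands. Concretely: (a) $A'\setminus F(\beta)$ need not have at most two components, and its components need not be annuli essential in the class of $\beta$, so there is no Gr\"otzsch-type serial decomposition available a priori; (b) even when a piece does sit inside a bounded neighborhood of $A(\beta)$, containment alone gives no modulus bound --- monotonicity of modulus requires the sub-annulus to be essential in the ambient annulus; (c) no mechanism is offered for why the ``bounded loss'' is multiplicative with constants depending only on topology. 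The standard fix is to avoid decomposing $A'$ altogether: build one conformal metric $\rho$ equal to $|q|^{1/2}$ scaled by $1/\ell_q(\gamma_s)$ on $F(\beta)$ and on the expanding annuli on both sides of it (and, say, $0$ elsewhere). Every curve homotopic to $\beta$ has $\rho$-length bounded below by a universal constant, while $\Area(\rho)\asymp \Mod(F(\beta))+\log\frac{d(A(\beta))}{|\beta|_q}\asymp\max(\Mod(F(\beta)),\Mod(A(\beta)))$, so the definition of extremal length gives $\Ext_X(\beta)\gtrsim 1/\max(\Mod(F(\beta)),\Mod(A(\beta)))$ directly; combined with Maskit's comparison (and the collar/Jenkins--Strebel fact you invoke for the other direction) this yields (1), and the same metric furnishes the missing upper bound in (2). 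Until that step is supplied, the proof of the main inequality in (1) is incomplete.
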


 We  will also need the following result from  \cite{Min:1}. Minsky gives a useful estimate of the extremal length of a curve, which uses {\sl collar decomposition}. For  $0<\e_1<\e_0$ less than the Margulis constant, let $\mc A $ be the collection of pairwise disjoint annular neighborhoods of the geodesics  of hyperbolic length at most $\e_1$, whose internal boundary components have hyperbolic length $\e_0$.  Then  the union of $\mc A$  with the collection of components of $X- \mc A$  is the $(\e_0,\e_1)$ collar decomposition of $X$. For a subsurface $Q\subset X$, and $\alpha$ a homotopy class of curves, we denote by $\Ext_Q(\alpha)$ the extremal length of the restriction of the curves in  $\alpha$ to $Q$.

   \begin{thm}\label{minsky}
[Theorem 5.1 in \cite{Min:1}] Let $X$ be a Riemann surface of finite type with boundary lengths in the hyperbolic metric at most $\ell_0$, and let $\mc Q$ be the set of components of the $(\e_0,\e_1)$ collar decomposition of X. Then, for any curve $\alpha$ in $X$, then  $$\Ext_X(\alpha)\asymp  \underset{Q\in \mc Q}{\max} \Ext_{Q}(\alpha)$$
  where the multiplicative factors depends only on $\e_0,\e_1,\ell_0$ and the topological type of $X$.\label{minsky}
  \end{thm}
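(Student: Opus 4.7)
\textbf{Proof proposal for Theorem~\ref{minsky}.}

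The plan is to establish the asymptotic equivalence in two directions, each using the metric definition of extremal length and the bounded combinatorics of the $(\e_0,\e_1)$-collar decomposition. Two features of $\mc Q$ will be used throughout: the cardinality $|\mc Q|$ is bounded in terms of the topology of $X$, and the boundary curves between components have hyperbolic length in a range determined by $\e_0, \e_1, \ell_0$, so a taut representative of $\alpha$ crosses $\partial\mc Q$ at most a topologically bounded number of times. For each $Q\in\mc Q$, the restriction $\alpha\cap Q$ will denote the homotopy class, rel $\partial Q$, of this taut intersection, and $\Ext_Q(\alpha) := \Ext_Q(\alpha\cap Q)$ is the extremal length of the associated arc/curve system.

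For the lower bound $\max_Q \Ext_Q(\alpha) \lesssim \Ext_X(\alpha)$, fix a component $Q$ and choose a conformal metric $\rho_Q$ on $Q$ of area $1$ with $\ell_{\rho_Q}(\alpha\cap Q)^2 \geq \tfrac{1}{2}\Ext_Q(\alpha)$. Extending $\rho_Q$ by zero to a metric $\tilde\rho$ on $X$ preserves the area, and any curve $\alpha'$ homotopic to $\alpha$ in $X$ contains, after being made taut, a subsystem of arcs realizing the class of $\alpha\cap Q$ inside $Q$; hence $\ell_{\tilde\rho}(\alpha)\geq \ell_{\rho_Q}(\alpha\cap Q)$. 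The admissibility of $\tilde\rho$ in the definition of $\Ext_X(\alpha)$ then gives $\Ext_X(\alpha)\geq \tfrac{1}{2}\Ext_Q(\alpha)$, and taking the maximum over $Q$ yields the bound.

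For the upper bound $\Ext_X(\alpha)\lesssim \max_Q \Ext_Q(\alpha)$, fix any admissible metric $\rho$ on $X$; I want to control $\ell_\rho(\alpha)^2/\Area(\rho)$. In each $Q$, pick a (nearly) optimal arc representative of $\alpha\cap Q$ with respect to $\rho|_Q$, of length $\ell_Q := \ell_{\rho|_Q}(\alpha\cap Q)$. By the topological bound on $|\alpha\cap\partial Q|$, these optimal pieces can be concatenated across the $(\e_0,\e_1)$-collars using short arcs near the cuffs (whose total length is $O(\e_0)$ by the hyperbolic geometry of the collar and the bound on boundary length $\ell_0$) to yield a representative $\alpha^*\in[\alpha]$ with
$$
\ell_\rho(\alpha)\;\leq\;\ell_\rho(\alpha^*)\;\leq\;\sum_{Q}\ell_Q \;+\; C(\e_0,\e_1,\ell_0,X).
$$
By Cauchy--Schwarz and the defining inequality $\ell_Q^2\leq \Ext_Q(\alpha)\cdot\Area(\rho|_Q)$, we get
$$
\Bigl(\sum_Q\ell_Q\Bigr)^2 \;\leq\; |\mc Q|\sum_Q \ell_Q^2 \;\leq\; |\mc Q|\cdot\max_Q\Ext_Q(\alpha)\cdot\Area(\rho).
$$
Dividing by $\Area(\rho)$ and taking the supremum over $\rho$ gives $\Ext_X(\alpha)\lesssim \max_Q\Ext_Q(\alpha)$ up to multiplicative and additive constants depending only on the allowed parameters.

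\textbf{Main obstacle.} The delicate step is the upper bound: concatenating the locally optimal arcs across the collar decomposition to produce a single global representative of $\alpha$, and controlling the cost of this concatenation. One must verify that the taut intersection class $\alpha\cap Q$ is well-defined and that the $\rho|_Q$-optimal arcs can be joined across each annular piece without essentially lengthening them; this uses that the boundary curves of the decomposition have hyperbolic length between $\e_1$ and $\e_0$, so that traveling along $\partial Q$ a bounded distance (bounded by $\ell_0$) suffices to match up endpoints. The additive constant in the final estimate absorbs these boundary corrections, matching the $\asymp$ in the statement, where multiplicative \emph{and} additive constants are allowed. A secondary subtlety is that, on a thin annular component $A$, the relevant arc system is either empty, a power of the core, or transverse strands, and each case must be checked against $\Ext_A(\alpha)$ explicitly, but these fall out from the modulus formulas already encoded in Lemma~\ref{lem:ext}.
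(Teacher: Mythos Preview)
First, note that the paper does not supply its own proof of this statement; it is quoted as Theorem~5.1 of Minsky~\cite{Min:1} and used as a black box, so there is no in-paper argument to compare yours against.

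Your lower bound $\max_Q \Ext_Q(\alpha)\lesssim \Ext_X(\alpha)$, obtained by extending a near-optimal metric on $Q$ by zero to all of $X$, is correct and standard.

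The upper bound, however, has a genuine gap. You want to glue the locally $\rho$-optimal arcs in adjacent pieces $Q,Q'$ along their common boundary component, and you assert that the connecting arcs ``near the cuffs'' have total length $O(\e_0)$ by the hyperbolic geometry of the collar. But the length you must control is the $\rho$-length of those arcs, and $\rho$ is an \emph{arbitrary} conformal metric on $X$; nothing bounds its mass on $\partial Q$ by a constant depending only on $(\e_0,\e_1,\ell_0,X)$. If $\rho$ concentrates most of its area near a single boundary curve, then travelling even a short hyperbolic distance along that curve can cost arbitrarily large $\rho$-length. Your additive constant $C(\e_0,\e_1,\ell_0,X)$ is therefore illusory: it secretly depends on $\rho$, and once you take the supremum over $\rho$ the bound is lost.

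A closely related issue is that the $\rho|_Q$-optimal arc in each piece has \emph{free} endpoints on $\partial Q$ (this is how $\Ext_Q(\alpha)$ is defined), so there is no reason those endpoints match across a shared boundary. To force a match you must either pay the uncontrolled $\rho$-cost above, or switch to a fixed-endpoint extremal problem whose value you have not compared to $\Ext_Q(\alpha)$. Showing that the free-endpoint and prescribed-endpoint arc problems in a collar differ only by factors controlled by $(\e_0,\e_1,\ell_0)$ is exactly where the substance of Minsky's argument lies; it does not fall out of a Cauchy--Schwarz estimate on $\sum_Q \ell_Q$ as you have written.
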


\subsection{Limits of quadratic differentials}
We need the following  convergence result for quadratic differentials where one or more curves have extremal length approaching $0$. The proof follows more or less immediately from results in  \cite{Ma:5}.  
\begin{thm}
\label{thm:limits}
Suppose $X_n$ is a sequence of Riemann surfaces, $q_n$ is a sequence
of unit area quadratic differentials on $X_n$, 
and $\gamma_1,\ldots, \gamma_j$ is a  collection of disjoint simple closed curves such that 
\begin{itemize}
\item the  extremal length of each $\gamma_i$ goes to $0$ along $X_n$
\item the extremal length of every other closed curve is bounded below away from $0$ along the sequence. 
\item there is no flat cylinder in the homotopy class of $\gamma_i$ 
\end{itemize}
Then by passing to a subsequence, for any subsurface $\Omega_n\subset X_n$ bounded by the geodesic representatives of the $\gamma_i$,  whose $q_n$-area is bounded away from $0$, there is a surface $\Omega_\infty$ with punctures and a nonzero finite area quadratic differential $q_\infty$ on $\Omega_\infty$ such that $q_n$ restricted to $\Omega_n$ converges uniformly on compact sets to $q_\infty$. 

The convergence means that for any  neighborhood $U$ 
of the punctures on $\Omega_\infty$  
\begin{enumerate}
\item \label{conv1} for large enough $n$ there is a conformal map $F_n:\Omega_\infty\setminus U\to X_n$
\item \label{conv2} $F_n^*q_n\to q_\infty$ as $n\to\infty$ uniformly on $\Omega_\infty\setminus U$.
\end{enumerate} 
\end{thm}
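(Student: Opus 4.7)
My plan is to reduce this to a standard compactness argument for Riemann surfaces together with normal families type reasoning for quadratic differentials, invoking the results of \cite{Ma:5} as indicated.

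First, I would set up the underlying conformal convergence. By the Maskit comparison between extremal length and hyperbolic length recalled just before the statement, the hypothesis that $\Ext_{X_n}(\gamma_i)\to 0$ is equivalent to $l_{\sigma_n}(\gamma_i)\to 0$, so each $\gamma_i$ is pinching in the hyperbolic metric $\sigma_n$. Margulis collars around the $\gamma_i$ therefore have moduli tending to $\infty$, while the remaining curves have hyperbolic lengths bounded away from $0$ by the second hypothesis, so by Mumford compactness applied to the complement of the collars and passing to a subsequence, the conformal structures on $\Omega_n$ converge geometrically to a punctured Riemann surface $\Omega_\infty$ of finite type in which the curves $\gamma_i$ have been replaced by pairs of cusps. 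This yields, for any neighborhood $U$ of the punctures of $\Omega_\infty$, conformal embeddings $F_n\colon \Omega_\infty\setminus U\to X_n$ for large $n$, giving conclusion (\ref{conv1}).

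Next I would extract a limit quadratic differential. Since each $q_n$ has unit area globally and $\Area_{q_n}(\Omega_n)$ is bounded away from $0$, the pullbacks $F_n^*q_n$ form a sequence of holomorphic quadratic differentials on $\Omega_\infty\setminus U$ whose $L^1$-norms are uniformly bounded. A standard normal families argument (the holomorphic functions representing $q_n$ in local coordinates on a relatively compact set have uniformly bounded integrals, hence uniformly bounded sup norms on slightly smaller compacta by the mean value property) yields a subsequence converging uniformly on compact subsets of $\Omega_\infty\setminus U$ to a holomorphic quadratic differential $q_\infty$ on $\Omega_\infty\setminus U$. Exhausting $\Omega_\infty$ by compact sets and applying a diagonal argument, $q_\infty$ extends to a meromorphic quadratic differential on $\Omega_\infty$, establishing (\ref{conv2}).

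The part that requires genuine input from \cite{Ma:5} is showing that $q_\infty$ is not identically zero, has finite area, and has at most simple poles at the new punctures. Nonvanishing follows from the area lower bound $\Area_{q_n}(\Omega_n)\ge c>0$: if $q_\infty$ were $0$, then a definite fraction of area would have to escape into arbitrarily thin collar neighborhoods of the $\gamma_i$. This is where the third hypothesis enters and is, I expect, the main obstacle: if the class of some $\gamma_i$ contained a flat cylinder $C_n$ with $\Area_{q_n}(C_n)$ bounded away from $0$, then since the modulus of $C_n$ is forced to stay bounded (the $q_n$-length and width would have to degenerate in a controlled way), area would indeed be trapped in the collapsing annulus and lost in the limit, giving rise either to a higher order pole at the puncture or to a zero limit. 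The hypothesis rules this out and, combined with the bound $d(A(\gamma_i))$ being universally bounded from Lemma~\ref{lem:ext}, forces the $q_n$-length and width of the degenerating annular region to tend to $0$ together, so that $q_\infty$ has at most a simple pole at each puncture and thus finite area. Passing to a final subsequence delivers the stated convergence.
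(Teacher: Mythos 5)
Your first two steps (geometric convergence of $\Omega_n$ to a punctured limit surface, and extraction of $q_\infty$ by a normal-families argument from the uniform $L^1$ bound) are fine and essentially match the paper, which gets conclusion (\ref{conv1}) from Bers's compactification of moduli space rather than from Mumford compactness plus collars. The genuine gap is in the third step, which you yourself flag as ``the main obstacle'': proving that $q_\infty$ is not identically zero on a component whose $q_n$-area is bounded below. What has to be shown is that no definite fraction of $q_n$-area can hide in the degenerating neck around a $\gamma_i$, and your sketch asserts rather than proves this. The hypothesis is only that there is no flat cylinder in the class of $\gamma_i$; it does not by itself ``force the $q_n$-length and width of the degenerating annular region to tend to $0$ together,'' and the universal bound on $d(A(\gamma_i))$ from Lemma~\ref{lem:ext} controls neither the area of the whole neck nor the cylinder-like part of $q_n$ inside it. At the decisive point your argument is circular: you conclude that area cannot be trapped because flat cylinders of definite area are excluded, but a priori the neck could still carry definite area through the $dz^2/z^2$-type part of $q_n$ even when the maximal flat cylinder is empty, unless its coefficient is quantitatively bounded.

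That quantitative bound is exactly the input from \cite{Ma:5} that the paper uses and that your proposal never supplies. In plumbing coordinates $z_iw_i=t_i$ on the neck $A_i=\{|t_i|^{1/2}<|z_i|<1\}$, the Corollary to Lemma 5.1 of \cite{Ma:5} gives $q_n=a_n/z^2+f_n/z+tg_n/z^3$ with $a_n,f_n,g_n$ uniformly bounded, and Lemma 5.3 of \cite{Ma:5} shows that the absence of a flat annulus in the class of $\gamma_i$ forces $-|a_n|^{1/2}\log|t_i|\leq 1$. Hence the $1/z^2$ term, the only one capable of carrying area of order $\log(1/|t_i|)$, contributes area $O\bigl(|a_n|\log(1/|t_i|)\bigr)=O\bigl(|a_n|^{1/2}\bigr)\to 0$, the $tg_n/z^3$ term is likewise negligible, and so if $q_\infty\equiv 0$ (forcing $f_n\to 0$) the total $q_n$-area of $\Omega_n$ would tend to $0$, contradicting the lower bound. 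Without this expansion, or an equivalent quantitative statement, your step does not go through. A further small slip: a flat cylinder in the class of $\gamma_i$ with area bounded below would have modulus tending to infinity, not staying bounded, since $|\gamma_i|_{q_n}\leq \Ext_{X_n}(\gamma_i)^{1/2}\to 0$; this occurs inside the excluded scenario so it does not affect the logic, but it reflects the missing quantitative mechanism at the heart of the proof.
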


\begin{proof}  Using the compactification of the moduli space of Riemann surfaces (see \cite{Bers:1}), by passing to a subsequence we can assume $X_n$
converges to a limiting Riemann surface $X_\infty$ with paired punctures corresponding to each $\gamma_i$  so that \eqref{conv1} holds above.  Then again passing to a subsequence we can assume  $q_n$ converges to some finite area quadratic differential $q_\infty$ on each component $\Omega_\infty$ of $X_\infty$; the convergence  as in \eqref{conv2}.  We need to show that that if $\Omega_n$ has $q_n$-area bounded below then $q_\infty$ is not identically $0$ on the corresponding $\Omega_\infty$. For each paired punctures on $X_\infty$ pick holomorphic coordinates  $0<|z_i|<1$  and $0<|w_i|<1$ 
on the corresponding punctured discs.  For $n$ sufficiently large, for each $i$ there is a $t_i=t_i(n)$ which goes to $0$ as $n\to\infty$  such that $X_n$ can be recovered from $X_\infty$ by removing the punctured discs $0<|z_i|<|t_i|$ and $0<|w_i|<|t_i|$ and then gluing the annulus $|t_i|\leq |z_i|\leq 1$ to the annulus $|t_i|\leq |w_i|\leq 1$ by the formula 
$$z_iw_i=t_i.$$ This produces  an annulus in the class of $\gamma_i$.  In forming $X_n$, we also allow a small deformation of the complex structure of $X_\infty$ in the complement of the union of the discs.  We need to consider the punctured discs $0<|z_i|<1$ contained in $\Omega_\infty$ and the corresponding annulus  
$$A_i=\{z_i:|t_i|^{1/2}<|z_i|<1\}\subset X_n.$$  In the coordinates of $A_i$ the $q_n$-geodesic in the class of $\gamma_i$ lies outside any fixed compact set $K$ for $n$ large enough. Fix now the  index $i$ and suppress that subscript. By the Corollary following Lemma 5.1 in \cite{Ma:5}, we can express  $q_n$  in $A=A_i$ as 
$$q_n=a_n/z^2+f_n/z+tg_n/z^3,$$
where $a_n,f_n,g_n$ are uniformly bounded family of  holomorphic functions of $z$.  
 It is easy to see that the last term integrated over $A$ goes to $0$ as $t=t_i$ goes to $0$.  Since there is no flat annulus in the class of $\gamma_i$,
by Lemma 5.3 of \cite{Ma:5}, we have $$-|a_n|^{1/2}\log |t|\leq 1.$$ This implies that the first term of the expansion of $q_n$ also has small integral over $A$. Since we are assuming that the integral of $|q_n|$  is bounded away from $0$ on $\Omega_n$ we must have that $f_n$ converges to a nonzero function on the disc $0<|z|<1$, and so $q_\infty$ is not identically $0$. 
\end{proof}

\section{Lemmas relating length, slope and area}

We need to recognize instances when the area of a subsurface is small.  
As a consequence of the preceeding Theorem  we show that if all bounded length curves have small horizontal length, then the area  is small. 

\begin{lem}
\label{lem:everyvert}
With the same assumption as in Theorem~\ref{thm:limits}  suppose $q_n'$ is another quadratic differential on $X_n$ such that $h_{q_n'}(\alpha)\to 0$ for any fixed homotopy class of curves in a nonannular component $\Omega_n$ of the  complement of the curves $\gamma_1,\ldots, \gamma_p$. 
Then $\Area_{q_n'}(\Omega_n)\to 0$.

\end{lem}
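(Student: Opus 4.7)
My plan is to argue by contradiction. Suppose that $\Area_{q_n'}(\Omega_n)$ does not tend to zero; pass to a subsequence on which $\Area_{q_n'}(\Omega_n) \geq c > 0$ for all $n$. I aim to produce a single isotopy class of simple closed curves $\alpha$ on $\Omega_n$ for which $h_{q_n'}(\alpha)$ is bounded below along the sequence, directly contradicting the hypothesis.

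The main tool is Theorem~\ref{thm:limits}, applied to the new sequence $q_n'$. The hypotheses on the surfaces $X_n$ and the curves $\gamma_1,\ldots,\gamma_j$ are inherited from the ambient setup, and one has the a priori bound
\[
|\gamma_i|_{q_n'}^{\,2} \;\leq\; \Ext_{X_n}(\gamma_i)\cdot\Area(q_n') \;\longrightarrow\; 0,
\]
which forces the circumference of any flat $q_n'$-cylinder in the class of $\gamma_i$ to vanish. After setting aside these cylindrical collars (whose residual area contribution to $\Omega_n$ will be dealt with separately below) the remaining hypothesis of Theorem~\ref{thm:limits} is satisfied by $q_n'$. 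Passing to a further subsequence, this produces a limit Riemann surface $\Omega_\infty$ with punctures at the pinched $\gamma_i$'s, a finite-area quadratic differential $q_\infty'$ on $\Omega_\infty$, and conformal embeddings $F_n\colon\Omega_\infty\setminus U \to X_n$ with $F_n^*q_n'\to q_\infty'$ uniformly on compact subsets. The lower bound $\Area_{q_n'}(\Omega_n)\geq c$, combined with the cylinder control, ensures $q_\infty' \not\equiv 0$.

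To complete the contradiction, pick any essential simple closed curve $\alpha$ supported in a compact part $K\subset\Omega_\infty$. Setting $\alpha_n := F_n(\alpha)$, the uniform convergence $F_n^*q_n'\to q_\infty'$ on $K$ gives
\[
h_{q_n'}(\alpha_n)\;\longrightarrow\;h_{q_\infty'}(\alpha),
\]
since the horizontal length of a fixed curve is continuous in the quadratic differential in this topology. The hypothesis of the lemma forces $h_{q_n'}(\alpha_n)\to 0$, and hence $h_{q_\infty'}(\alpha)=0$ for every essential simple closed curve $\alpha$ on $\Omega_\infty$. But $\Omega_\infty$ is nonannular and $q_\infty'$ is a nonzero holomorphic quadratic differential on it, so its vertical measured foliation $F^v_{q_\infty'}$ is nontrivial. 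Any nontrivial measured foliation on a nonannular surface has positive intersection with some essential simple closed curve---obtained by closing up any sufficiently long transverse arc---and this is the desired contradiction.

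The step I expect to be the main obstacle is the treatment of possible flat $q_n'$-cylinders in the classes of the $\gamma_i$, since these are the sole obstruction to a direct application of Theorem~\ref{thm:limits} to $q_n'$. The extremal length estimate above controls their circumferences, but one still has to verify, using the hypothesis applied to curves crossing each half-cylinder, that the cylinder areas themselves tend to zero, so that excising small collars preserves both the hypothesis $h_{q_n'}(\alpha)\to 0$ and the lower bound on $\Area_{q_n'}(\Omega_n)$; this is what guarantees that the limit $q_\infty'$ really is nonzero on $\Omega_\infty$ and that the contradiction goes through.
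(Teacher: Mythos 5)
Your proposal is correct and is essentially the paper's own argument: assume the areas stay bounded below along a subsequence, apply Theorem~\ref{thm:limits} to $q_n'$ to obtain a nonzero limit $q'_\infty$ on the nonannular limit surface $\Omega_\infty$, and observe that the hypothesis $h_{q_n'}(\alpha)\to 0$ forces every closed $q'_\infty$-geodesic to have zero horizontal length, which is impossible. The flat-cylinder caveat you single out as the main obstacle is not addressed in the paper at all --- its proof is exactly the two-line compactness argument above --- so your treatment is, if anything, more careful than the published one.
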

\begin{proof}
By passing to a subsequence we can assume $\Omega_n\to\Omega_\infty$ and $q_n'\to q'_\infty$.  Now each geodesic  $\alpha$ of $q'_\infty$ has  horizontal length equal to $0$ which is impossible, since $\Omega_\infty$ is not a flat cylinder.  
\end{proof}

The next  lemma compares areas of flat cylinders with respect to different flat metrics. 

\begin{lem}
\label{lem:annulus}
Suppose $q_1,q_2$ are quadratic differentials with the same horizontal foliation $|dy|$  and whose vertical foliations are topologically equivalent with transverse measures $\nu_1,\nu_2$.  For any $B>0$, there exists $\epsilon_0,M$ such that for all $\epsilon<\epsilon_0$, if  $C_1=C_1(\beta)$ is a maximal flat cylinder for $q_1$ with core curve $\beta$ with the properties that  
\begin{itemize}
\item  The absolute value of the slope of $\beta$ in $C_1$ is at least $1$.
\item  $|\beta|_{q_1}\leq \epsilon_0$.
\item $\text{Area}_{q_1}(C_1)\geq B$. 
\item Any horizontal segment  $I$ crossing $C_1$ satisfies  $\nu_2(I)\leq \epsilon$ 
\end{itemize}
then, if  $C_2$ is the maximal flat cylinder defined by $q_2$ in the class of
 $\beta$, we have 
$\text{Area}_{q_2} (C_2)\leq 2\epsilon\epsilon_0$.
Moreover the ratio of lengths of vertical arcs crossing the cylinders are comparable. 
\end{lem}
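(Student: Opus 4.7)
The strategy is to compute $q_2$-areas of cylinders by Fubini with respect to the common transverse measure $|dy|$ of the horizontal foliation $F^h$, which $q_1$ and $q_2$ share.

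I would first estimate $\Area_{q_2}(C_1)$. In the $q_1$-cylinder coordinates $(s,t)\in[0,L]\times[0,d_1]$ on $C_1$, with $L=|\beta|_{q_1}$, $V=v_{q_1}(\beta)$, and $H=h_{q_1}(\beta)$, the function $y=sV/L+tH/L$ is well-defined on $C_1$ modulo $V$, since the monodromy shifts $y$ by $V$. For each $y\in[0,V)$ the horizontal leaf $I_y$ in $C_1$ is a single (possibly multiply wrapped) arc crossing $C_1$ from one boundary component to the other, and by hypothesis $\nu_2(I_y)\le\epsilon$. Fubini then gives
\[
\Area_{q_2}(C_1) \;=\; \int_0^V \nu_2(I_y)\,dy \;\le\; V\epsilon \;\le\; \epsilon_0\epsilon,
\]
since the slope condition $V\ge H$ together with $H^2+V^2=L^2$ forces $V\le L\le\epsilon_0$.

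The decisive step is to show that the maximal $q_2$-flat cylinder $C_2$ is contained in $C_1$, once $\epsilon_0$ is chosen small enough depending on $B$. Granted this containment, $\Area_{q_2}(C_2)\le\Area_{q_2}(C_1)\le V\epsilon<2\epsilon\epsilon_0$. The containment argument combines the small $q_2$-area of $C_1$ with its large $q_1$-height $d_1\ge B/\epsilon_0$: by the length-area inequality applied to $C_1$ viewed as an annulus in $X_2$, one has $\Mod_{X_2}(C_1)\le\Area_{q_2}(C_1)/L_2^2\le V\epsilon/L_2^2$ with $L_2=|\beta|_{q_2}$, which combined with $\Ext_{X_2}(\beta)\ge L_2^2$ forces $L_2$ to be extremely small. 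From this geometric smallness one then argues that the flat $q_2$-neighborhood of the $q_2$-geodesic $\beta^*_2$ cannot escape $C_1$ before meeting a $q_2$-singularity on $\partial C_1$.

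For the moreover statement, a vertical arc crossing $C_i$ is a leaf of $F^v_i$, so its $q_i$-length equals its $|dy|$-measure, which depends only on the common horizontal transverse structure. The containment $C_2\subseteq C_1$ together with the matched $|dy|$-monodromy $V$ of both cylinders yields the required comparability of these $|dy|$-extents with constant $M=M(B)$. The main obstacle is establishing $C_2\subseteq C_1$: one must combine the large $q_1$-height of $C_1$ with a careful analysis of $q_2$-singularities near $\partial C_1$, using the smallness of $|\beta|_{q_2}$ derived above.
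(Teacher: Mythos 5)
Your Fubini estimate $\Area_{q_2}(C_1)\le v_{q_1}(\beta)\,\epsilon\le\epsilon_0\epsilon$ is fine, but the step you yourself flag as decisive --- the containment $C_2\subseteq C_1$ --- is a genuine gap, and in fact it is not true in general. The maximal $q_2$-cylinder is bounded by $q_2$-singular leaves, i.e.\ it grows until its boundary hits singularities, and there is no reason these singularities should lie on $\partial C_1$: the boundary of $C_1$ consists of $q_1$-saddle connections in the core direction of $C_1$, which is not a $q_2$-geodesic or singular set, so $C_2$ can and typically does protrude beyond $C_1$. Your sketch (``the flat $q_2$-neighborhood of the $q_2$-geodesic cannot escape $C_1$ before meeting a $q_2$-singularity on $\partial C_1$'') tacitly assumes such singularities sit on $\partial C_1$. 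Moreover the intermediate deduction is not valid as stated: from $\Mod_{X_2}(C_1)\le \Area_{q_2}(C_1)/L_2^2$ together with the lower bound $\Ext_{X_2}(\beta)\ge L_2^2$ one cannot conclude that $L_2$ is ``extremely small'' --- you have two bounds pointing in the same direction and no upper bound on $\Ext_{X_2}(\beta)$ or lower bound on $\Mod_{X_2}(C_1)$ to play them against. Since your area bound for $C_2$ rests entirely on the containment, the proof does not go through; note also that the constant $2\epsilon\epsilon_0$ in the statement (rather than $\epsilon\epsilon_0$, which containment plus your Fubini bound would give) is exactly the allowance for $C_2$ sticking out of $C_1$.

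The paper's argument avoids containment altogether. Using $\Area_{q_1}(C_1)\ge B$, slope at least $1$, and $|\beta|_{q_1}\le\epsilon_0$, the vertical leaf through a singularity $P_0$ on one boundary of $C_1$ makes several returns to a horizontal transversal $I$ inside $C_1$; since the vertical foliations agree topologically, three consecutive returns produce closed $q_2$-geodesics homotopic to $\beta$ inside $C_1$, so $C_2$ at least meets $C_1$. Then, instead of trying to trap $C_2$ inside $C_1$, one bounds its horizontal width: the maximal cylinder $C_2$ cannot swallow the singularity $P_0$, and if it extends past the other boundary component of $C_1$ one uses a second transversal $I'$ based at a singularity $P_1$ on that component; the $\nu_2$-width across $C_2$ is then at most $\nu_2(I)+\nu_2(I')\le 2\epsilon$, and since the vertical extent of $\beta$ is at most $\epsilon_0$ this gives $\Area_{q_2}(C_2)\le 2\epsilon\epsilon_0$. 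If you want to salvage your route, you would need to replace the containment claim by this kind of width estimate controlling the part of $C_2$ outside $C_1$; the comparability of vertical crossing lengths then follows, as you say, from the common horizontal transverse measure $|dy|$.
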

\begin{proof}
We may represent $C_1$ as a parallelogram with a pair of horizontal sides that are glued to each other by a translation. 
Let  $I$ be an oriented  horizontal segment crossing $C_1$ starting  at a singularity $P_0$ on one boundary component.  Let $Q_0$ be the  endpoint of $I$ on the other boundary component.  
Assume without loss of generality that 
the 
slope of $\beta$ in $C_1$ is negative.   This means that there is a vertical leaf
through $P_0$ that enters $C_1$ in the positive direction and returns to $I$ without leaving $C_1$ and translated by   
$h_1:=h_{q_1}(\beta)\leq \epsilon_0$.    Starting at $P_0$, for $\epsilon_0$ 
sufficiently small compared to $B$, there will be at least two additional  returns for the vertical leaf through $P_0$ before the leaf leaves the cylinder. Since the vertical foliations of $q_1$ and $q_2$ coincide, the same is true for the vertical leaf of  $q_2$ leaving $P_0$, although now the translation amount, denoted $h_2$, is different.    

Given  any three  consecutive intersections with $I$ of the leaf starting at  $P_0$,  there is a  closed geodesic  with respect to $q_2$ homotopic to $\beta$ through the middle point on $I$. 
Thus $C_1$ contains closed geodesics, with respect to the flat structure of $q_2$,  homotopic to $\beta$.
That is, there is a maximal flat cylinder $C_2$ some of whose core curves are contained in $C_1$. 
Since maximal cylinders have singularities on their boundaries, either $P_0$ is on the boundary of $C_2$ or possibly outside it.
It is possible that $Q_0$ is in the interior of $C_2$, so if we take the closed geodesic $\beta$ of $q_2$ through $Q_0$ it does not pass through a singularity.   However in that case, if we similarly take a horizontal segment $I'$ crossing $C_1$ starting at a singularity $P_1$ on the  same boundary component of $C_1$ as $Q_0$, then $\beta$ cuts $I'$ in its interior. This implies that the horizontal distance across $C_2$ is at most $\nu_2(I)+\nu_2(I')\leq 2\epsilon$.  Since the height of $\beta$ is at most $\epsilon_0$, we get the desired area bound for $C_2$. 
Since the horizontal foliations coincide, the lengths of corresponding vertical segments coincide and the lengths of vertical segments crossing the cylinders are comparable.  

\end{proof}

The next Lemma gives a lower bound of extremal length of a curve family in terms of the area of the surface it is contained in and the length of the boundary.

\begin{lem}\label{lower}
 Let $X$ be a Riemann surface. Let $q$ be a unit area quadratic
 differential on $X$.  Let $\Omega$ be a subsurface with geodesic
 boundary and which does not contain a flat cylinder parallel to a boundary component.   If the length $|\partial\Omega|_q$ is small enough, then for any homotopy class  of curves $\alpha \subset \Omega$ with geodesic representative $\alpha$, 
$$\Ext_X(\alpha)\geq \frac{|\alpha|_q^2}{\Area_q(\Omega)+O(|\partial \Omega|_q^2)}.$$
\end{lem}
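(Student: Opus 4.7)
The plan is to construct a single admissible conformal metric $\rho$ on $X$ whose $q$-area is bounded by $\Area_q(\Omega)+O(|\partial\Omega|_q^2)$ and such that every $\gamma\in[\alpha]$ satisfies $\ell_\rho(\gamma)\ge |\alpha|_q$; the variational definition $\Ext_X(\alpha)\ge (\inf_\gamma\ell_\rho(\gamma))^2/A(\rho)$ will then deliver the stated inequality directly.

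First, I would verify that the $q$-geodesic representative $\alpha$ of the class in fact lies inside $\Omega$. Because $\partial\Omega$ consists of $q$-geodesics, a length-minimizer for $[\alpha]$ within $\Omega$ is still locally $q$-geodesic in $X$, and the absence of flat cylinders parallel to a boundary component rules out any ambiguity in the $q$-geodesic class, identifying this minimizer with the $X$-geodesic $\alpha$. Next I would define $\Omega^+$ to be the $\delta$-neighborhood of $\Omega$ in the $q$-metric, where $\delta=c|\partial\Omega|_q$ for a small fixed constant $c$. For $|\partial\Omega|_q$ small enough this neighborhood is a genuine flat collar containing no $q$-singularities beyond those in $\Omega$, its boundary $\partial\Omega^+$ is a union of $q$-geodesics parallel to $\partial\Omega$, and one computes $\Area_q(\Omega^+)=\Area_q(\Omega)+\delta|\partial\Omega|_q+O(\delta^2)=\Area_q(\Omega)+O(|\partial\Omega|_q^2)$. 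Setting $\rho = |q|^{1/2}\chi_{\Omega^+}$ gives a conformal metric with $A(\rho)=\Area_q(\Omega^+)$.

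The heart of the proof is to show $\ell_\rho(\gamma)=|\gamma\cap\Omega^+|_q\ge|\alpha|_q$ for every $\gamma\in[\alpha]$. I plan to pass to the universal cover $\tilde X$ and choose a lift $\tilde\Omega^+$ that contains the axis $\tilde\alpha$ of the deck transformation $T_\alpha$ corresponding to $\alpha$; this lift is $q$-convex because its boundary is a collection of $q$-geodesic lines, and the translation length of $T_\alpha$ on $\tilde\Omega^+$ equals $|\alpha|_q$. A lift $\tilde\gamma$ of $\gamma$ joins some $\tilde p$ to $T_\alpha(\tilde p)$ (we can arrange both endpoints to lie in $\tilde\Omega^+$ using that $\alpha$ is non-peripheral), and the orthogonal projection $\pi(\tilde\gamma)$ onto the convex set $\tilde\Omega^+$ is a curve inside $\tilde\Omega^+$ with the same endpoints, so by the CAT$(0)$ translation-length inequality $|\pi(\tilde\gamma)|_q\ge|\alpha|_q$.

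The main obstacle is that $|\pi(\tilde\gamma)|_q$ can strictly exceed $|\tilde\gamma\cap\tilde\Omega^+|_q$ by the $q$-length of the projection of the excursions of $\tilde\gamma$ outside $\tilde\Omega^+$ onto $\partial\tilde\Omega^+$. To absorb these corrections I exploit the chosen collar width: every excursion that leaves $\tilde\Omega^+$ must traverse the collar $\Omega^+\setminus\Omega$ of width $\delta$ twice, contributing at least $2\delta$ to $|\gamma\cap\Omega^+|_q$, while each corresponding boundary projection has $q$-length of order $|\partial\Omega|_q$; a balancing argument over the number of excursions, together with $1$-Lipschitzness of projection onto a CAT$(0)$-convex set, shows $|\gamma\cap\Omega^+|_q\ge|\alpha|_q$, any residual slack being absorbed into the $O(|\partial\Omega|_q^2)$ term in the denominator after the extremal-length inequality is applied and the resulting expression expanded.
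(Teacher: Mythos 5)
Your overall strategy is the same as the paper's: build a conformal metric $\rho$ supported on a small $q$-collar neighborhood of $\Omega$, bound its area by $\Area_q(\Omega)+O(|\partial\Omega|_q^2)$, and bound the $\rho$-length of every $\gamma\in[\alpha]$ from below by $|\alpha|_q$. Where you diverge is in the mechanism for the length lower bound: the paper uses an elementary curve-surgery (replace each excursion of $\gamma$ outside $\Omega$ by the arc of $\partial\Omega$ that it is homotopic to rel endpoints, noting that $\partial\Omega$ is $q$-geodesic), while you use nearest-point projection onto a convex lift of $\Omega^+$ in the CAT$(0)$ universal cover, which is a cleaner and more robust idea in principle.

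However, your version of the argument has a genuine gap, and it is precisely in the "balancing" step that you wave at. You take the collar width to be $\delta=c|\partial\Omega|_q$ for a \emph{small} fixed $c$. Then an excursion of $\tilde\gamma$ outside $\tilde\Omega^+$ must cross the collar twice, paying at least $2\delta=2c|\partial\Omega|_q$ towards $|\tilde\gamma\cap\tilde\Omega^+|_q$, but its projection onto $\partial\tilde\Omega^+$ can contribute on the order of $|\partial\Omega|_q$ (or more, if the excursion winds) to $|\pi(\tilde\gamma)|_q$. When $c$ is small the collar cost does \emph{not} dominate the projection contribution, so the bookkeeping $\sum_i|a_i|+\sum_i|\pi(e_i)|\ge|\alpha|_q$ coming from $1$-Lipschitzness does not upgrade to the needed $\sum_i|a_i|\ge|\alpha|_q$. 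The paper avoids this exactly by taking the collar width to be $\epsilon=|\partial\Omega|_q$ itself; then a boundary replacement arc has $q$-length at most $|\partial\Omega|_q=\epsilon\le 2\epsilon$, the collar-crossing cost, while the collar area is still $O(|\partial\Omega|_q^2)$. Your final remark, that the residual slack can be "absorbed into the $O(|\partial\Omega|_q^2)$ term in the denominator," is not correct as stated: if the length bound you actually obtain is $\ell_\rho(\gamma)\ge|\alpha|_q-O(|\partial\Omega|_q)$, the resulting estimate has numerator $\bigl(|\alpha|_q-O(|\partial\Omega|_q)\bigr)^2$, whose cross term $-2|\alpha|_q\cdot O(|\partial\Omega|_q)$ is not $O(|\partial\Omega|_q^2)$ and cannot be pushed into the denominator. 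The lemma is stated for \emph{all} homotopy classes in $\Omega$, including curves with $|\alpha|_q$ comparable to $|\partial\Omega|_q$, so this deficit is real. Fixing the collar width to $|\partial\Omega|_q$ (rather than a small multiple of it) and then showing the projection of each excursion is dominated by the associated collar crossings would repair the argument.
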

\begin{proof}
Note that this lemma does not simply follow from the definition of
extremal
length  since the area of $\Omega$ may be smaller than $1$. 
Let $\epsilon=|\partial \Omega|_q$.  Define a metric $\rho$ on $X$ as folows. Let $\rho$ coincide with the $q$-metric on  $\mc N_\e(\Omega)$, the $\epsilon$-neighborhhood of $\Omega$ and  the $q$  metric multiplied by a small $\delta$ on $\Omega'=X\setminus \mc N_\e(\Omega)$. Let $\alpha''$ be any curve in the homotopy class of $\alpha$.
  If $\alpha''$ is not contained in $\Omega$ then 
$\alpha''$ and a segment of $\partial \Omega$ bound a disk. The fact that 
$d_\rho(\Omega',\Omega)=\e$ and $\partial \Omega$ is a geodesic
implies that we can replace an arc of $\alpha''$ with an arc of
$\partial \Omega$ to produce $\alpha'''\subset \overline \Omega$ with smaller length. We conclude that the infimum of the  length in the metric $\rho$ is realized by the geodesic $\alpha$  in $\Omega$. By  definition,
$$\Ext_X(\alpha)\geq \frac{\inf_{\alpha''\sim\alpha}\rho(\alpha'')_q^2}{A(\rho)}\geq \frac{|\alpha|_q^2}{\Area_q(\Omega)+O(\e^2)+\delta\Area_q (\Omega')}.$$ The term $O(\e^2)$ in the inequality above comes from $\Area_q(\mc N_\e(\Omega)\setminus \Omega)$. Since $\delta$ is arbitrary, we have the result.

\end{proof}

\begin{df}
Given a quadratic differential $q$ and $\delta>0$,  a geodesic  $\gamma$ in the $q$ metric is called \textit{almost $(q,\delta)$-vertical} if $v_q(\gamma)\geq \delta h_q(\gamma)$.
\end{df}
Note that $\delta$ may be small in the above definition.  
\begin{lem}
\label{lem:vert}
Let  $q$ be a quadratic differential on $X$ a surface without boundary.   For any $\delta>0$  there is a curve $\be$ which is almost $(q,\delta)$-vertical.

\end{lem}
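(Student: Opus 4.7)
My plan is to approximate the vertical foliation $[F_q^v,|dx|]$ itself by weighted simple closed curves in $\mc{MF}$ and extract the desired $\be$ from that approximating sequence via continuity of the intersection pairing. By Thurston's density theorem, projective classes of simple closed curves are dense in $\mc{PMF}$, so I can find simple closed curves $\be_n$ together with positive scalars $c_n$ such that
\[
c_n\be_n\ \longrightarrow\ \la\,[F_q^v,|dx|]\qquad\text{in }\mc{MF},\ \la>0.
\]

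Next I would apply continuity of the intersection pairing on $\mc{MF}\times\mc{MF}$. Since $i([F_q^v,|dx|],[F_q^v,|dx|])=0$ while $i([F_q^v,|dx|],[F_q^h,|dy|])=\Area_q(X)>0$, this gives
\[
c_n\,i(\be_n,F_q^v)\to 0\qquad\text{and}\qquad c_n\,i(\be_n,F_q^h)\to \la\,\Area_q(X)>0.
\]
Passing to $q$-geodesic representatives (still denoted $\be_n$), the intersection numbers with the vertical and horizontal foliations are exactly the horizontal and vertical $q$-lengths of $\be_n$, so $h_q(\be_n)=i(\be_n,F_q^v)$ and $v_q(\be_n)=i(\be_n,F_q^h)$. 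Dividing the two limits above yields $v_q(\be_n)/h_q(\be_n)\to\infty$; in the degenerate case where $h_q(\be_n)=0$ (which happens precisely when $\be_n$ is freely homotopic to a closed vertical leaf of $q$), the inequality $v_q(\be_n)\geq \de\, h_q(\be_n)$ is satisfied trivially and that $\be_n$ already works.

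For the prescribed $\de>0$, I would then choose $n$ large enough that $v_q(\be_n)/h_q(\be_n)\geq \de$ and set $\be=\be_n$. I do not foresee a real obstacle; the only point requiring care is ensuring the limiting scalar $\la$ is strictly positive, which is automatic since convergence in $\mc{PMF}$ is convergence of projective classes of nonzero elements of $\mc{MF}$.
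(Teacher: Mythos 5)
Your argument is correct, but it is a genuinely different route from the paper's. You get $\be$ softly: approximate $[F_q^v,|dx|]$ by weighted simple closed curves (Thurston density in $\mc{MF}$), use continuity of the intersection pairing on $\mc{MF}\times\mc{MF}$ together with $i(F_q^v,F_q^v)=0$ and $i(F_q^v,F_q^h)=\Area_q(X)>0$, and then identify, for the $q$-geodesic representative, $h_q(\be_n)=i(\be_n,F_q^v)$ and $v_q(\be_n)=i(\be_n,F_q^h)$, so the ratio $v_q/h_q$ blows up; the degenerate case $h_q(\be_n)=0$ is handled trivially. The one step you should flag explicitly is the identification of the flat geodesic's horizontal and vertical variations with the intersection numbers — this holds because flat geodesics are quasi-transverse to both foliations and quasi-transverse representatives realize minimal transverse measure (FLP), a standard but not completely free fact; together with Thurston density and continuity of $i$, your proof leans on a fair amount of $\mc{MF}$ machinery. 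The paper instead argues elementarily and constructively: after disposing of the case of a vertical saddle connection, it reduces to a minimal vertical foliation, takes a horizontal transversal $I$ of length $\lambda$ with $\lambda^2<A/\delta$, uses the first-return rectangle decomposition to find a rectangle of height at least $A/\lambda$, and closes up a vertical segment in that rectangle by a short arc near $I$, giving $v_q(\be)/h_q(\be)\geq A/\lambda^2\geq\delta$ with explicit control on the horizontal length. What the paper's construction buys is quantitative and structural information (an explicit curve built from a long vertical segment plus a short transversal arc), which is reused in the flat-cylinder arguments and in the proof of Theorem C; what your approach buys is brevity and independence from the interval-exchange picture, and it suffices for the lemma as stated since the bounded-length refinement in Lemma~\ref{vertical} is obtained later by a compactness argument anyway.
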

\begin{proof}
 If $\Gamma_q\neq\emptyset$ there is a vertical saddle connection which is obviously almost $(q,\delta)$-vertical . If a vertical leaf is dense in a subsurface then   the boundary of the subsurface contains a vertical saddle connection. Thus we can assume that the vertical foliation is minimal. Let $A$ be the area of $q$.  The first return map of the foliation to a horizontal transversal $I$ with an endpoint at a singularity defines a generalized interval exchange transformation. 
Choose a horizontal transversal $I$ of length $\lambda$ satisfying 
\begin{equation}
\label{eq:lambda}
\lambda ^2<\frac{A}{\delta}.
\end{equation} 
The transversal $I$ determines a decomposition of the surface into rectangles $\{R_i\}$, with heights $h_i$ and widths $\lambda_i$, whose horizontal sides are subsets of $I$. Each rectangle has two horizontal sides on $I$. Consequently,  if we count each $\lambda_i$ twice  we  have $$\sum_i \lambda_i=2\lambda.$$ 
Since we count each $\lambda_i$ twice we have
$$\sum_i h_i\lambda_i=2A.$$

\begin{figure}[h]
  \hfill
  \begin{minipage}[t]{.45\textwidth}
    \begin{center}  
      \begin{center}

\psfragscanon
\psfrag{3}{$\be_1$}
\psfrag{1}{$p$}
\psfrag{2}{$q$}
\psfrag{5}{$h_j$}
\psfrag{4}{$R_i$}
\psfrag{6}{$\mc N$}
\psfrag{7}{$I$}
\psfrag{9}{$\be_2$}
\includegraphics[]{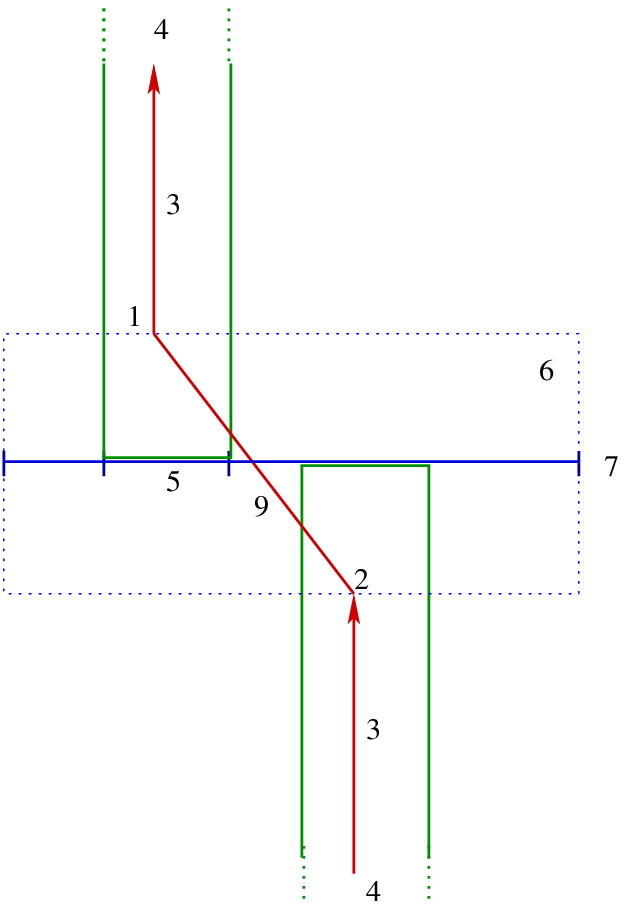}
\caption{ $\be$ is the union of $\be_1$ and $\be_2$}
\label{f1}
\end{center}    \end{center}
  \end{minipage}
  \hfill
  \begin{minipage}[t]{.45\textwidth}
    \begin{center}  
     \psfragscanon
\psfrag{3}{$\be_1$}
\psfrag{1}{$\be_3$}
\psfrag{10}{$\be_2$}
\psfrag{5}{$\be_4$}
\psfrag{4}{$R_i$}
\psfrag{11}{$R_j$}
\psfrag{6}{$\mc N$}
\psfrag{7}{$I^+$}
\psfrag{9}{$I^-$}

\includegraphics[]{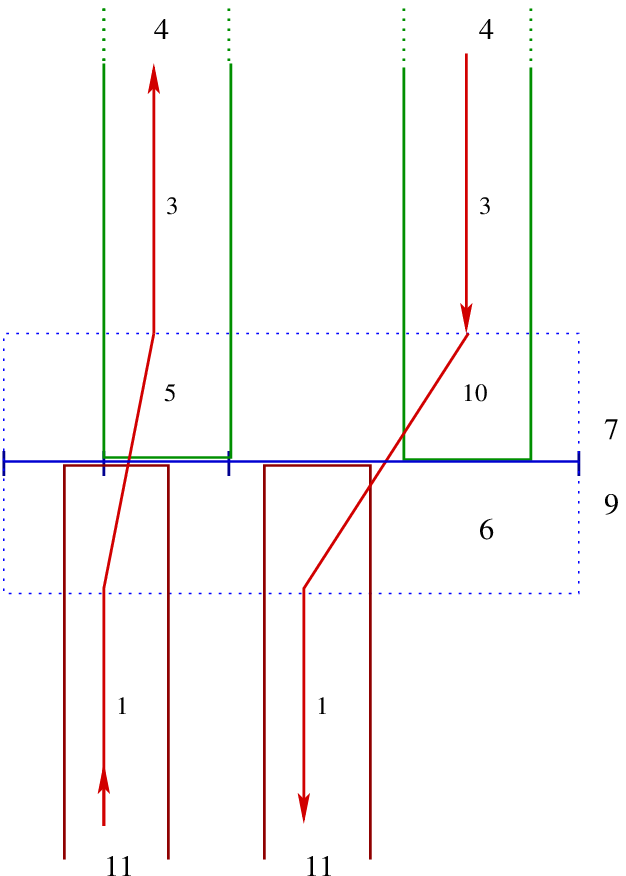}
\caption{$\be$ is the union of $\be_1,\be_2,\be_3$ and $\be_4$}
\label{f2}
    \end{center}
  \end{minipage}
  \hfill
\end{figure}

We conclude that \begin{equation}
\label{eq:max}
\max_ih_i\geq\frac{A}{\lambda}
\end{equation}
 Let $h_i$ realize this maximum. There are two cases. The first case (see Figure \ref{f1}) is that the horizontal sides of $R_i$ are on  opposite sides of $I$. Fix a small neighborhood $\mc{N}$ of $I$. We form a simple closed curve $\be=\be_1*\be_2$. Here $\be_1$ is a vertical segment in $R_i$ whose endpoints $p$ and $q$ are on the boundary of $\mc N$, and $\be_2$ is an arc transverse to the horizontal foliation  in $\mc N$ joining $p$ and $q$. Then $\be$ is also transverse to the horizontal foliation.  Its geodesic representative has the same vertical length as $\be$, namely, $h_i$. The horizontal length of $\beta$ is at most $\lambda$.  Together with (\ref{eq:lambda}) and (\ref{eq:max})  we have that $$\frac{v_q(\beta)}{h_q(\beta)}
\geq \frac{h_i}{\lambda}\geq \frac{A}{\lambda^2}\geq \delta.$$

 In the second case (Figure \ref{f2}), both horizontal sides of $R_i$ are on the same side of $I$ (call it $I^+$). Then there must also be a rectangle $R_j$ with top and bottom on $I_-$. We may form a simple closed curve $\be$ which consists of a vertical segment in $R_i$, a vertical segment in $R_j$ and a pair of arcs in 
 $\mc N$ which are transverse to the horizontal foliation.  Similar to the case above, the ratio of vertical and horizontal components of $\be$ is at least $\delta$.

\end{proof}

\begin{df}
Given a unit area quadratic differential $q$ on a surface $X$ without boundary,    a subsurface
 $\Omega\subsetneq X$ is said to be  $(\epsilon,\epsilon_0)$-thick if the following conditions hold:
\begin{itemize}
\item  $\partial \Omega$ is a geodesic in the metric of $q$.
\item    $\Ext_X(\partial \Omega)\leq \epsilon$
\item the shortest non-peripheral curve in $\Omega$ has $q$ length at least $\epsilon_0$.

\end{itemize}
The surface $X$ itself is $\epsilon_0$-thick if  it satisfies the third condition above. 
\end{df}

The following Lemma says that we can find almost $(q,\delta)$ vertical saddle connections in thick surfaces. 
\begin{lem} \label{vertical} 
For any $B>0,\epsilon_0>0$ there exists $\epsilon>0,\delta>0,D>0$ and $m_0$ such that for any $(\epsilon,\epsilon_0)$-thick subsurface  
$\Omega\subset X$  which does not contain a flat annulus isotopic to a boundary component  and such that $\Area_q(\Omega)\geq B$, the following two conditions hold:  
\begin{enumerate}
\item there is an almost $(q,\delta)$-vertical geodesic  $\gamma$ whose interior lies in $\Omega$ and  
such that  $|\gamma|_q<D$. 
\item For any saddle connection $\gamma$ which is not vertical or horizontal, 
 there is an $m\leq m_0$ and  a collection $\omega_1,\ldots,\omega_m$ of disjoint vertical segments so that for every horizontal leaf $H$ 
$$|\text{card}(H\cap\gamma)-\sum_{i=1}^m \text{card}(H\cap\omega_i)|\leq 2.$$
\end{enumerate}
\end{lem}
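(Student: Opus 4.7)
The plan is to prove the two assertions separately: Part~(1) by a compactness argument based on Theorem~\ref{thm:limits} and Lemma~\ref{lem:vert}, and Part~(2) by a combinatorial construction from the horizontal separatrix decomposition of $\Omega$.

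For Part~(1), I argue by contradiction. Suppose no admissible $(\epsilon,\delta,D)$ works for the given $B,\epsilon_0$; then for each $n$ there exist $X_n$, $q_n$ and an $(\epsilon_n,\epsilon_0)$-thick subsurface $\Omega_n \subset X_n$ with $\epsilon_n \to 0$, satisfying $\Area_{q_n}(\Omega_n) \ge B$, containing no flat annulus parallel to $\partial \Omega_n$, yet admitting no almost $(q_n, 1/n)$-vertical geodesic of $q_n$-length $< n$ interior to $\Omega_n$. The thickness hypothesis forces the boundary components of $\Omega_n$ to be the only curves whose extremal length tends to $0$, so Theorem~\ref{thm:limits}, applied with these curves as $\gamma_1,\ldots,\gamma_j$, produces after passing to a subsequence a surface $\Omega_\infty$ with punctures, a nonzero finite-area differential $q_\infty$, and conformal embeddings $F_n: \Omega_\infty \setminus U \to \Omega_n$ satisfying $F_n^* q_n \to q_\infty$ uniformly on compacta. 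Because $\Omega_\infty$ inherits $\epsilon_0$-thickness with area in $[B,1]$, its $q_\infty$-diameter is bounded by some $D_0 = D_0(\epsilon_0, B, \mathrm{top.})$. Lemma~\ref{lem:vert} applied on $\Omega_\infty$ then yields, for any fixed $\delta_0>0$, an almost $(q_\infty,\delta_0)$-vertical simple closed geodesic $\beta$ of $q_\infty$-length at most $D_0$. Pulling $\beta$ back via $F_n$ and invoking the uniform convergence of the pulled-back differentials yields, for all large $n$, an almost $(q_n,\delta_0/2)$-vertical geodesic of $q_n$-length $\le 2D_0$ inside $\Omega_n$, contradicting the construction of the sequence.

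For Part~(2), the horizontal separatrices of $q$ divide $\Omega$ into finitely many flow boxes $R_1,\ldots,R_N$ for the vertical flow, with $N$ controlled by the number of zeroes of $q$ in $\Omega$ and hence by $B$ and $\epsilon_0$. Being of neither horizontal nor vertical slope, the saddle connection $\gamma$ meets each visited $R_j$ in a family of parallel straight chords between the horizontal top and bottom of $R_j$, apart from at most two truncated chords adjacent to the endpoints $P,P'$ of $\gamma$. I plan to replace each full-height chord of $\gamma$ in $R_j$ by a vertical full-height chord of $R_j$, and then to concatenate these vertical chords across the vertical identifications between the flow boxes, producing a collection of at most $m \le m_0$ disjoint vertical segments $\omega_1,\ldots,\omega_m$ in $\Omega$, where $m_0$ is a topological constant depending only on $B$ and $\epsilon_0$ (each $\omega_i$ is forced to begin and end at a zero of $q$, so $m$ is bounded by the number of ends of vertical separatrices in $\Omega$). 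For any horizontal leaf $H$ and any $R_j$, a single crossing of $H \cap R_j$ meets each full-height chord of $R_j$---of either $\gamma$ or of an $\omega_i$---in exactly one point; consequently the contributions of $\gamma$ and of $\bigcup_i \omega_i$ to $\text{card}(H \cap \cdot)$ coincide within each $R_j$. Summing over $j$, the global counts $\text{card}(H \cap \gamma)$ and $\sum_i \text{card}(H \cap \omega_i)$ agree up to the contribution of the two truncated chords at $P, P'$, giving the error bound of $2$.

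The principal obstacle is the combinatorial step of Part~(2): one must verify that $\gamma$'s chords can indeed be rearranged into at most $m_0$ pairwise disjoint vertical segments compatible with the vertical identifications, and that the endpoint contribution is correctly bounded by $2$. Part~(1), by contrast, is a routine compactness argument once the diameter bound $D_0$ on $\Omega_\infty$ is established from $\epsilon_0$-thickness.
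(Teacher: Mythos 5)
Your Part (1) is essentially the paper's own argument: the same contradiction set-up with $(\epsilon,\delta,D)=(1/n,1/n,n)$, the same appeal to Theorem~\ref{thm:limits} to extract a limit $(\Omega_\infty,q_{1,\infty})$, and the same use of Lemma~\ref{lem:vert} on the limit followed by uniform convergence to transport the curve back to $\Omega_n$; your auxiliary diameter bound $D_0$ is unnecessary (the finite $q_\infty$-length of the single limiting curve already contradicts ``length at least $n$''), but harmless.

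Part (2), however, has a genuine gap, and it is exactly the step you flagged. First, when the horizontal foliation is minimal the horizontal separatrices are dense, so ``the horizontal separatrices divide $\Omega$ into finitely many flow boxes'' is not well defined without truncating them, and in any finite rectangle decomposition a straight saddle connection does \emph{not} meet each box only in bottom-to-top chords plus two truncated ones at its endpoints: chords can enter and exit through the vertical sides of the boxes (this is the typical situation when $\gamma$ is nearly horizontal), and those partial chords are not confined to the two endpoint boxes, so a horizontal leaf can cross $\gamma$ inside a box without crossing any of your full-height replacement chords and the error is not bounded by $2$. Second, even for the full-height chords, their number is comparable to the number of times $\gamma$ crosses the boxes, which grows with $|\gamma|$, and nothing forces the chosen vertical chords in consecutive boxes to line up end-to-end across the horizontal edges: $\gamma$ drifts horizontally from one crossing to the next while a vertical continuation does not, so the chords need not concatenate, and the bound $m\leq m_0$ with $m_0$ depending only on $(B,\epsilon_0)$ and the topology is not established. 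The paper avoids both problems by an adaptive construction along $\gamma$ itself: starting at an endpoint $p$, one takes the vertical leaf $\ell_1$ at $p$ on the side of $\gamma$ and the maximal embedded triangle bounded by a vertical segment $\omega_1\subset\ell_1$, a horizontal segment, and an initial segment of $\gamma$; the process can only stop at a singularity on the horizontal side, where one starts a new vertical leaf and a new maximal embedded quadrilateral, so the number of pieces is bounded by the number of horizontal prongs at the zeros (a topological constant $m_0$), and inside each triangle or quadrilateral every horizontal leaf meets the $\gamma$-side and the vertical side $\omega_i$ the same number of times, the total discrepancy of $2$ coming only from the two ends. If you want to salvage your approach you would have to incorporate this kind of adaptive alignment; as written, the box-by-box replacement neither controls $m$ nor the intersection count.
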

\begin{proof}
For the proof of the first statement, we  argue by contradiction. 
Suppose the statement is not true. Then there is a sequence $X_n$ of surfaces, a sequence of unit area quadratic differentials $q_n$ on $X_n$ and 
 a sequence of $(1/n,\epsilon_0)$-thick proper subsurfaces $\Omega_n$ with  $q_n$-area at least $B$  such that 
the shortest almost $(q_n,1/n)$-vertical curve on $\Omega_n$ has length at least $n$. 
We now apply Theorem~\ref{thm:limits} to find  a subsequence $q_n$ which converges uniformly on compact sets to  $q_\infty$ on a limiting surface
$\Omega_\infty$.  The uniform convergence implies that $\Omega_\infty$ is  $\epsilon_0/\sqrt{B}$-thick.

By Lemma~\ref{lem:vert}, taking $\delta=1$, 
 there is a  simple closed curve $\be$ on $\Omega_\infty$ such that 
$$\frac{v_{q_\infty}(\be)}{h_{q_\infty}(\be)}\geq 1.$$
By uniform convergence, $v_{q_n}(\be)\to v_{q_\infty}(\be)$, and  $h_{q_n}(\be)\to h_{q_\infty}(\be)$ and thus for large enough $n$, 
$$\frac{v_{q_n}(\be)}{h_{q_n}(\be)}\geq 1/2$$ and furthermore,
$|\be|_{q_n}\leq |\be|_{q_\infty}+1$. This is a contradiction to the assumption that the shortest $(q_n,1/n)$-vertical curve has length at least $n$, proving the first statement.

We prove the second statement. Begin at one endpont $p$ of $\gamma$
and take the vertical leaf  $\ell_1$ leaving $p$ such that $\gamma$
lies in the $\pi/2$  sector  between $\ell_1$ and a horizontal leaf
leaving $p$.  Move along $\ell_1$  as far as possible to a point $x_1$
in such a way that the segment $\omega_1$ of $\ell_1$, from $p$ to
$x_1$, a horizontal segment $\kappa_1$ from $x_1$ to a point $y_1\in\gamma$ and the
segment $\gamma_1$ of $\gamma$ from $p$  to $y_1$ bounds an embedded
triangle $\Delta_1$ with no singularity in its interior.  If  $\gamma_1=\gamma$
we are done. We take $\omega_1$ as the desired vertical segment.   If
not, then there is a singularity $p_1$ in the interior of
$\kappa_1$. 
At $p_1$ one vertical leaf enters $\Delta_1$. 
 Choose the vertical leaf $\ell_2$ at $p_1$ that makes an angle of
 $\pi$ with the vertical leaf that enters $\Delta_1$ and such that the
 horizontal leaf $\kappa_1$ through $p_1$ on the side of $\Delta_1$ is between them.
Then horizontal leaves through points on $\ell_2$ near $p_1$ will intersect $\gamma$ before returning to $\ell_2$. Now repeat the procedure 
with $\ell_2$ in place of $\ell_1$ and find a maximal   embedded
quadrilateral $\Delta_2$ disjoint from $\Delta_1$  in its interior   consisting  of a pair of horizontal sides, a segment of $\gamma$ and a segment $\omega_2\subset \ell_2$.  We repeat this procedure, if necessary with a new $\ell_3$ until the last segment on $\gamma$ ends at the other endpoint. There are a fixed number of singularities, hence a fixed number of horizontal sides leaving them and so a bounded number of such embedded quadrilaterals.  Say the bound is $m_0$. The desired vertical segments are  $\omega_1,\ldots, \omega_m$, where  $m\leq m_0$.
\end{proof}

For the sequel we will need the following result, due to Rafi, \cite{Ra:1} relating hyperbolic and flat lengths of curves in a thick subsurface. The first statement is Theorem 1, the second is part of Theorem 4 in \cite{Ra:1}
\begin{thm}\label{rafi}
For every  $(\epsilon,\epsilon_0)$-thick subsurface $Y$ of a Riemann surface $X$ with hyperbolic metric $\sigma$ and quadratic differential $q$, there exists $\lambda=\lambda(q,Y)$ such that up to multiplicative constants depending only on topology of $X$
\begin{enumerate}
\item For every non-periferal simple closed curve $\alpha$ in $Y$,   $$|\alpha|_q\asymp \lambda l_{\sigma}(\alpha),$$ the multiplicative constants depending only on the topology of $Y$.
\item $\Area_q(Y)\leq \lambda^2$
\end{enumerate} 
\end{thm}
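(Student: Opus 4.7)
The plan is to identify a natural scale $\lambda=\lambda(q,Y)$ measuring the ``size'' of the restriction $q|_Y$, such that the rescaled differential $q/\lambda^2$ becomes a bounded-geometry flat structure on $Y$ whose metric is comparable to the hyperbolic metric $\sigma|_Y$ away from the collars of $\partial Y$. A natural candidate is to take $\lambda$ to be the $q$-length of a non-peripheral simple closed curve $\alpha_0\subset Y$ whose hyperbolic length is bounded above in terms of the topology of $Y$, for instance a curve from a Bers-type marking of $Y$; such $\alpha_0$ exists because the $(\epsilon,\epsilon_0)$-thickness guarantees that $Y$ has bounded hyperbolic geometry in its interior and bounded topological type. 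Any two such choices give comparable $\lambda$'s since any pair has bounded geometric intersection, and curves of bounded combinatorial complexity have comparable $q$-lengths in a thick flat surface.

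For statement (2), I would argue by a compactness/contradiction argument, using Theorem~\ref{thm:limits}. Suppose there is a sequence $(X_n,q_n,Y_n)$ of thick subsurfaces with $\Area_{q_n}(Y_n)/\lambda_n^2\to\infty$. Rescale $q_n\mapsto q_n':=q_n/\lambda_n^2$, so that the marking curve $\alpha_{0,n}$ satisfies $|\alpha_{0,n}|_{q_n'}\asymp 1$ and the $q_n'$-systole on $Y_n$ is bounded below. Then by Theorem~\ref{thm:limits}, after passing to a subsequence $(Y_n,q_n')$ converges to a finite-area limit $(Y_\infty,q_\infty')$ of bounded topological type. Uniform convergence on compact sets then forces $\Area_{q_n'}(Y_n)$ to be bounded, contradicting the supposition. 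The boundary curves of $Y_n$, although short in extremal length on $X_n$, do not prevent this compactness because they are collapsed to punctures on $Y_\infty$.

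For statement (1), the comparison $|\alpha|_q\asymp\lambda\, l_\sigma(\alpha)$, I would first dispose of curves of bounded hyperbolic length. Any such $\alpha$ has bounded geometric intersection with a fixed Bers marking of $Y$, and in the rescaled flat surface $(Y,q/\lambda^2)$—which has bounded geometry in its thick part by step (2)—a curve of bounded intersection has $q/\lambda^2$-length bounded above and below, giving $|\alpha|_q\asymp\lambda\asymp\lambda\, l_\sigma(\alpha)$. For general $\alpha\subset Y$, decompose the hyperbolic geodesic representative into subarcs crossing the thick part of $Y$ and subarcs traveling along the hyperbolic collars of boundary components and the thin parts arising from the short curves in the complement. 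Each thick-part crossing contributes $\asymp 1$ to hyperbolic length and $\asymp\lambda$ to $q$-length. For a collar traversal, Lemma~\ref{lem:ext} identifies the flat counterpart (a flat cylinder or expanding annulus) and gives a matching estimate up to multiplicative constants.

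The main obstacle will be the collar regions of $\partial Y$: here the $q$-structure may consist of flat cylinders whose core curves have $q$-length much smaller than $\lambda$, so one must verify that the flat length of the portion of $\alpha$ traveling around such collars matches the hyperbolic collar-winding length, after multiplying by $\lambda$. This requires carefully comparing the modulus/length decomposition of Lemma~\ref{lem:ext} with the logarithmic growth of the hyperbolic collar, and checking that the slope of $\alpha$ in the flat collar translates correctly to twist parameters in the hyperbolic collar. Once this matching is in place, summing over all subarcs and applying Theorem~\ref{minsky} to control cross terms gives the desired comparability.
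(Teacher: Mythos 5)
First, a point of orientation: the paper does not prove Theorem~\ref{rafi} at all; it is quoted from Rafi \cite{Ra:1} (statement (1) is Theorem 1 there, statement (2) is part of Theorem 4 there), so there is no internal proof to compare yours against. What you have written is an attempt to reprove Rafi's theorem, and in its present form it has genuine gaps.

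The compactness argument you propose for (2) does not go through as stated. Theorem~\ref{thm:limits} is formulated for unit-area quadratic differentials $q_n$ on the surfaces $X_n$ for which the extremal lengths of the degenerating curves tend to zero, the extremal length of every other curve on $X_n$ is bounded below, and there is no flat cylinder in the degenerating classes; after rescaling by $\lambda_n^{-2}$ none of these hypotheses is arranged (the rescaled differentials have uncontrolled total area, the lower extremal-length bound is a condition on all of $X_n$ and does not follow from a $q_n'$-systole bound on $Y_n$, and flat cylinders homotopic to $\partial Y_n$ are not excluded). More seriously, even granting subconvergence, uniform convergence on compact subsets of the limiting punctured surface does not bound $\Area_{q_n'}(Y_n)$: the area of $Y_n$ can concentrate in flat cylinders or expanding annuli adjacent to $\partial Y_n$, i.e.\ exactly in the neck regions that escape every compact set of $Y_\infty$, so no contradiction is reached. (This is why the proof of Theorem~\ref{thm:limits} must analyze the expansion $q_n=a_n/z^2+f_n/z+tg_n/z^3$ rather than deduce area statements from convergence alone.) For (1), the step you yourself flag as the main obstacle---matching the flat length of the portions of $\alpha$ in the thin annuli (the flat cylinders and expanding annuli of Lemma~\ref{lem:ext}) with $\lambda$ times the hyperbolic collar contribution, twisting included---is precisely the analytic core of Rafi's theorem, and it is deferred rather than carried out; likewise the well-definedness of $\lambda$ rests on the assertion that curves of bounded intersection with a marking have comparable $q$-lengths in a thick piece, which is essentially an instance of statement (1) itself and cannot be assumed. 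The overall strategy (choose a size $\lambda$, rescale, compare metrics on the thick part, treat the thin annuli separately) is indeed in the spirit of \cite{Ra:1}, but the two load-bearing steps are missing or misapplied, so as a proof this is incomplete.
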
  
  
We will now compare extremal lengths of curves that are contained in the "same" subsurface $\Omega$ measured  with respect to the  metrics defined by two different quadratic differentials $q_1,q_2$ on surfaces $X_1,X_2$.   Specifically, if $\Omega$ is a subsurface with geodesic boundary with respect to $q_1$, and $\Omega$ does not contain a flat cylinder isotopic to a boundary component, then 
we denote by  $\Omega\subset X_2 $ the subsurface containing the same set of simple closed curves and with geodesic boundary with respect to $q_2$. If $\Omega$ is a flat cylinder with respect to $q_1$, then we denote by $\Omega$ the (possibly empty) maximal flat cylinder in the same homotopy class ( with respect to $q_2$.

The following Lemma allows us to find curves with very different extremal length if a subsurface  $\Omega$  has very different areas with respect to two quadratic differentials and one of the surfaces is  thick. 
\begin{lem}\label{area} 
For any $B, M, \delta, \epsilon_0>0$, there exist $\e,C,D>0$  so that the following holds. If $q_1$ and $q_2$ are quadratic differentials on $X_1,X_2$, and $\Omega$ is a proper subsurface with geodesic boundary with respect to  each quadratic differential,  which does not contain a flat cylinder with respect to $q_1$ parallel to a boundary component  and such that $\Omega$ satisfies  
\begin{enumerate}[(i)]
\item \label{a1} $$\Area_{q_1}(\Omega)\geq B,\, \Area_{q_2}(\Omega)<\e$$
\item \label{a2} for any almost $(q_1,\delta)$-vertical curve $\gamma\subset \Omega$  that satisfies $|\gamma|_{q_1}\leq D$, the vertical components satisfy $$\frac{1}{C}\leq\frac{v_{q_1}(\gamma)}{v_{q_2}(\gamma)}\leq C$$
\item \label{a3} $|\partial \Omega|_{q_2}<\e$
\item \label{a4} $\Omega$ is $(\epsilon,\epsilon_0)$- thick with respect to  $q_1$

\end{enumerate}
then there exists a curve $\gamma$ in $\Omega$ so that 
$$\frac{\Ext_{X_2}(\gamma)}{\Ext_{X_1}(\gamma)}\geq M.$$
\end{lem}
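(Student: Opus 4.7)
The plan is to apply Lemma~\ref{vertical} to the pair $(q_1,\Omega)$ to produce a short, almost-vertical simple closed curve $\gamma\subset \Omega$, then bound $\Ext_{X_1}(\gamma)$ from above using the thickness of $\Omega$ together with Theorem~\ref{rafi}, and bound $\Ext_{X_2}(\gamma)$ from below by combining hypotheses (\ref{a1})--(\ref{a3}) with Lemma~\ref{lower}. The parameters $\delta$ and $D$ appearing in the statement will be chosen to match those produced by Lemma~\ref{vertical} from the given $B$ and $\epsilon_0$, and $\e$ will be chosen at the end to be small enough in terms of all other parameters.

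First I would apply Lemma~\ref{vertical} to $q_1$ and $\Omega$, using hypothesis (\ref{a4}) together with $\Area_{q_1}(\Omega)\geq B$ and the assumption that $\Omega$ contains no flat cylinder isotopic to a boundary component. This produces a simple closed curve $\gamma$ whose interior lies in $\Omega$, with $|\gamma|_{q_1}\leq D$ and $v_{q_1}(\gamma)\geq \delta h_{q_1}(\gamma)$. Because $\gamma$ lies in the interior of a thick subsurface it is non-peripheral, so thickness forces $|\gamma|_{q_1}\geq \epsilon_0$; together with almost-verticality and the elementary bound $v_{q_1}(\gamma)+h_{q_1}(\gamma)\geq |\gamma|_{q_1}$ this yields $v_{q_1}(\gamma)\geq \tfrac{\delta}{1+\delta}\epsilon_0$.

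For the upper bound on $\Ext_{X_1}(\gamma)$, I would invoke Theorem~\ref{rafi}: in the thick subsurface $\Omega$ there exists a scale $\lambda_1$ with $\Area_{q_1}(\Omega)\leq \lambda_1^2$ and $|\gamma|_{q_1}\asymp \lambda_1\, l_\sigma(\gamma)$. Since $\Area_{q_1}(\Omega)\geq B$ and $|\gamma|_{q_1}\leq D$, this bounds $l_\sigma(\gamma)$ by a constant depending only on $B$, $D$ and the topology, and then Maskit's inequality gives $\Ext_{X_1}(\gamma)\leq K$ for such a constant $K$. For the lower bound on $\Ext_{X_2}(\gamma)$, hypothesis (\ref{a2}) gives $v_{q_2}(\gamma)\geq v_{q_1}(\gamma)/C$, whence $|\gamma|_{q_2}\geq v_{q_2}(\gamma)\geq \delta\epsilon_0/(C(1+\delta))$ is bounded below by a positive constant $c_1=c_1(\delta,\epsilon_0,C)$. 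I would then apply Lemma~\ref{lower} to $\gamma\subset \Omega\subset X_2$, using $\Area_{q_2}(\Omega)<\e$ from (\ref{a1}) and $|\partial\Omega|_{q_2}<\e$ from (\ref{a3}), to obtain
$$\Ext_{X_2}(\gamma)\geq \frac{|\gamma|_{q_2}^2}{\Area_{q_2}(\Omega)+O(|\partial\Omega|_{q_2}^2)}\geq \frac{c_1^2}{\e+O(\e^2)}.$$
Choosing $\e$ sufficiently small in terms of $M$, $K$ and $c_1$ then forces $\Ext_{X_2}(\gamma)/\Ext_{X_1}(\gamma)\geq M$, which is the desired conclusion.

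The main technical obstacle I anticipate is the interaction between the two flat metrics on the same topological subsurface $\Omega$. One must verify that $\gamma$, obtained as a $q_1$-geodesic in the interior of the $q_1$-version of $\Omega$, still lies inside the $q_2$-version of $\Omega$ (the one with $q_2$-geodesic boundary) in a way that makes Lemma~\ref{lower} applicable; in particular one must handle the possibility that $\Omega$ contains a flat $q_2$-cylinder parallel to a boundary component, since Lemma~\ref{lower} was stated only under the assumption that no such cylinder exists. A related bookkeeping issue is that the value of $\delta$ produced by Lemma~\ref{vertical} is dictated by $B$ and $\epsilon_0$, so the $\delta$ in the statement of Lemma~\ref{area} must be chosen to equal that value, and hypothesis (\ref{a2}) read accordingly.
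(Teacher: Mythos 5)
Your proposal follows essentially the same route as the paper: apply Lemma~\ref{vertical} to get a short almost-vertical curve $\gamma$, bound $\Ext_{X_1}(\gamma)$ from above via Theorem~\ref{rafi} and Maskit's inequality, bound $\Ext_{X_2}(\gamma)$ from below via hypothesis~(\ref{a2}), the lower bound $|\gamma|_{q_2}\geq \delta\epsilon_0/(C(1+\delta))$, and Lemma~\ref{lower}, and then take $\e$ small. The technical concerns you raise at the end (how to interpret $\Omega$ in the $q_2$-metric and the flat-cylinder hypothesis in Lemma~\ref{lower}) are genuine things to keep in mind, but the paper's own proof treats them just as implicitly as you do, so this does not represent a departure from, or a gap relative to, the argument in the text.
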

\begin{proof}
By Lemma \ref{vertical}, for some $\delta$ and $D$ there is an almost $(q_1,\delta)$-vertical curve $\gamma\subset \Omega$ such that 
\begin{equation}\label{v8}
\epsilon_0\leq |\gamma|_{q_1}<D
\end{equation}

Let $\sigma_i$ be the hyperbolic metric on $X_i$ and $l_{\sigma_i}(\gamma)$ denote the length of the geodesic $\gamma$ in the hyperbolic metric.  
By Theorem \ref{rafi},  
\begin{equation}\label{v1}
l_{\sigma_1}(\gamma)<C_1|\gamma|_{q_1}/\sqrt{\Area_{q_1}(\Omega)} \leq C_1D/\sqrt{B}
\end{equation}
where the constant $C_1$  depends only on the topology of the surface. Also by Maskit's comparison of hyperbolic and extremal lengths, 
\begin{equation}
\Ext_{X_1}(\ga)\leq \frac{1}{2}l_{\sigma_1}(\gamma)e^{l_{\sigma_1}(\gamma)/2}\leq  \frac{1}{2}C_1D/\sqrt{B}e^{C_1D/2\sqrt{B}}.
\end{equation}
Set $C_2= \frac{1}{2}C_1D/\sqrt{B}e^{C_1D/2\sqrt{B}}$, so that 
\begin{equation}\label{v2}
\Ext_{X_1}(\ga)\leq C_2
\end{equation}

On the other hand, by \eqref{v8}, assumption \eqref{a2} and the fact that $\gamma$ is almost $(q_1,\de)$-vertical  
\begin{equation}\label{v3}
|\gamma|_{q_2}\geq v_{q_2}(\ga)>\frac{1}{C}v_{q_1}(\ga)>\frac{\delta}{C(1+\delta)}|\gamma|_{q_1}\geq\frac{\epsilon_0\delta}{C(1+\delta)}.
\end{equation} and by Lemma \ref{lower}, 
\begin{equation}\label{v4}
\Ext_{X_2}(\gamma)\geq\frac{|\gamma|^2_{q_2}}{\Area_{q_2}(\Omega)+O(|\partial \Omega|^2_{q_2})}
\end{equation} Putting the inequalities \eqref{v2}, \eqref{v3}, \eqref{v4}  together and using assumptions \eqref{a1} and \eqref{a3},
 we obtain
\begin{equation}\label{v5}
\frac{\Ext_{X_2}(\gamma)}{\Ext_{X_1}(\gamma)}\geq \frac{\epsilon_0^2\delta^2}{C_2C^2(1+\delta)^2(\Area_{q_2}(\Omega)+O(|\partial \Omega|^2_{q_2}))}\geq \frac{C_3}{\e+O(\e^2)}
\end{equation}
where $C_3=\frac{\epsilon_0^2\delta^2}{C_2C^2(1+\delta)^2}$. Now, setting $\e$ sufficiently small compared to $\frac{C_3}{M}$ guarantees that the Lemma  holds.
\end{proof}

\section{Areas of subsurfaces along rays}

The proof of the main theorem is now based on the next proposition. 
We  have the following set-up. 
 Suppose $q_1,q_2$ are quadratic differentials on $X_1,X_2$ such that the vertical foliations 
$F^v_{q_1},F^v_{q_2}$ are topologically equivalent and have a minimal non uniquely ergodic component $\Omega$.  Suppose also that 
with respect to the  invariant ergodic measures $\nu_1,\ldots, \nu_p$ on $\Omega$, 
$|dx_1|=\sum_{k=1}^pa_k\nu_k$, with  $a_1>0$, while
$|dx_2|=\sum_{k=1}^p b_k\nu_k$ with $b_1=0$. 
Suppose  
$F^h_{q_1}=F^h_{q_2}$. 
Let $|dy|$ denote the transverse measure to this common horizontal
foliation.  We normalize so that \begin{equation}
\label{eq:normalized}
\int_\Omega a_1d\nu_1|dy|=1.
\end{equation}

\begin{prp} 
\label{subsurface}

With the above assumptions, 
 let $X_1(t),X_2(t)$ be the corresponding rays, and let $q_1(t),q_2(t)$ be the quadratic differentials on $ X_1(t), X_2(t)$ respectively.  For any sequence of times $t_n\to\infty$, there is a subsequence, again denoted $t_n$, 
and constants $\epsilon_0>0,c>0$, so that 
 for sufficiently small $\epsilon>0$, there is $t_0$, such that for $t_n\geq t_0$   there
  is a subsurface $Y_1(t_n) \subset\Omega$ satisfying 
\begin{enumerate}[(i)]
\item $Y_1(t_n)$ is $(\epsilon,\epsilon_0)$ thick with respect to $q_1(t_n)$.
\item  $\Area_{q_1(t_n)}(Y_1(t_n))\geq a_1(1-c\epsilon)$.
\item  $\Area_{q_2(t_n)}(Y_1(t_n))<c\epsilon$.
\end{enumerate}
\end{prp}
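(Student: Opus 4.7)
The plan is to construct $Y_1(t_n)$ by a compactness argument on the Teichm\"uller ray $X_1(t)$, combining Masur's non-unique ergodicity criterion with the mutual singularity of the measures $\nu_k$.

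First, since $|dx_1|$ restricted to $\Omega$ is a non-trivial combination of $p\geq 2$ mutually singular ergodic measures, the Masur criterion says that $X_1(t)$ leaves every compact set of moduli space as $t\to\infty$. By Mumford compactness and after passing to a subsequence of $t_n$, there is a fixed system of disjoint simple closed curves $\gamma^1,\ldots,\gamma^m\subset \Omega$ whose extremal lengths in $X_1(t_n)$ tend to $0$, while all other extremal lengths stay bounded below. These curves partition $\Omega$ into topological subsurfaces $\Omega^1,\ldots,\Omega^r$. The key structural input, drawn from the theory of non-uniquely ergodic foliations, is that this geometric decomposition aligns with the ergodic one: each $\Omega^j$ carries a single ergodic measure $\nu_{k(j)}$ up to measure zero. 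Let $j_0$ be the index with $k(j_0)=1$.

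Second, apply Theorem~\ref{thm:limits} to $q_1(t_n)$: after a further subsequence, $q_1(t_n)$ converges uniformly on compact sets to a limiting finite-area quadratic differential on the disjoint union of limit surfaces $\Omega^j_\infty$. Define $Y_1(t_n)\subset X_1(t_n)$ as the subsurface bounded by the $q_1(t_n)$-geodesic representatives of the boundary curves of $\Omega^{j_0}$. Property~(i) then follows: $\partial Y_1(t_n)$ has small extremal length since its components are among $\gamma^1,\ldots,\gamma^m$, and the interior contains no short curves by construction, with $\epsilon_0$ inherited from the uniform convergence of $q_1(t_n)$ on $\Omega^{j_0}_\infty$. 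For the areas, the Teichm\"uller flow preserves the product $|dx_1|\cdot|dy|$ on any fixed topological subsurface, so $\Area_{q_1(t_n)}(Y_1(t_n))\approx \int_{\Omega^{j_0}} |dx_1||dy| = \sum_k a_k \int_{\Omega^{j_0}} d\nu_k |dy|$. By mutual singularity $\nu_k(\Omega^{j_0})\approx 0$ for $k\neq 1$, and the remaining term equals $a_1\int_{\Omega^{j_0}} d\nu_1|dy|\approx a_1$ by the normalization~\eqref{eq:normalized}, giving~(ii). For~(iii) the same computation applied to $q_2$ yields $\Area_{q_2(t_n)}(Y_1(t_n))\approx \sum_{k\neq 1} b_k \int_{\Omega^{j_0}} d\nu_k|dy|\approx 0$, since $b_1=0$ and the other $\nu_k$'s have negligible mass on $\Omega^{j_0}$.

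The principal obstacle is making precise the bijection between the degenerating subsurface decomposition and the ergodic decomposition in the first step, and quantifying the approximation errors (written $\approx$ above) as explicit $O(\epsilon)$ bounds. This demands controlling how the $q_1$-vertical measure concentrates in each $\Omega^j$ as $t_n\to\infty$ and is achieved by a careful application of Theorem~\ref{thm:limits} together with the ergodicity of each $\nu_k$ on its support; the boundary terms coming from replacing $\Omega^{j_0}$ by the geodesic subsurface $Y_1(t_n)$ are controlled by Lemma~\ref{lem:annulus} and the fact that $\partial Y_1(t_n)$ has extremal length at most $\epsilon$.
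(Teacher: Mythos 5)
Your outline correctly identifies the skeleton of the argument (Masur's criterion, Mumford compactness, Theorem~\ref{thm:limits}, passing to a limiting quadratic differential), and it matches the paper's high-level structure. But there are two genuine gaps, the first of which is fatal as written.

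The first gap: your ``key structural input'' --- that each degenerating piece $\Omega^j$ carries a single ergodic measure $\nu_{k(j)}$ up to measure zero --- is not a citable fact from ``the theory of non-uniquely ergodic foliations.'' It is precisely the content of this proposition, and the bulk of the paper's proof is devoted to establishing it. The mechanism is a generic-points argument: mutual singularity of the $\nu_i$ gives transversals $I_{i,j}$ on which the measures differ by a definite $\delta$; one then shows that if generic points for two different $\nu_i,\nu_j$ both landed in the same coordinate rectangle $R$ of the limiting surface $Y_\infty$, the vertical leaves through them would hit $I_{i,j}$ with nearly identical frequencies as $t_n\to\infty$, contradicting \eqref{eq:far}. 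Covering the compact exhaustion of $Y_\infty$ by such rectangles and using connectedness then forces a single index per piece. The flat-cylinder pieces have no limiting surface, so they require a separate uniform-genericity argument via the sets $\Lambda_i^{T_0}$. None of this is automatic; by listing it as the ``principal obstacle'' and then deferring it, you have deferred the proof.

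The second gap is in property~(iii). Your line ``the same computation applied to $q_2$'' does not work, because $Y_1(t_n)$ is defined by $q_1$-geodesic boundary and you have no a priori control on how the $q_2$-metric sees this region; the area is not a measure-theoretic quantity transported by the same diffeomorphism. The paper closes this by observing that, for $j\neq 1$, every fixed closed curve in $Y_1(t_n)$ has $\nu_j(t_n)$-transverse measure (hence $q_2(t_n)$-horizontal length) tending to $0$, and then invoking Lemma~\ref{lem:everyvert} to convert ``all closed curves have small horizontal length'' into ``small $q_2$-area.'' Lemma~\ref{lem:annulus} is used only for the flat-cylinder pieces, not for the boundary matching you ascribe to it. Finally, note the piece carrying $\nu_1$ need not be unique, so $Y_1(t_n)$ should be taken as the complement of the union of pieces with negligible $\nu_1$-mass (the paper's $\Omega\setminus Z_1(t_n)$), rather than a single $\Omega^{j_0}$.
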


\begin{proof}[Proof of Proposition \ref{subsurface}]

Since  $F^v_{q_1}$ is minimal and not uniquely ergodic,  
we can apply Theorem 1.1 in \cite{Ma:3}, which says that 
the  ray $X_1(t)$ eventually leaves every compact set in the moduli
space as $t\to \infty$. (That theorem was stated in the case when the minimal component was the entire surface. The proof in the case of a minimal non uniquely ergodic component is identical.  In fact the main idea of the proof is repeated below in a slightly different context). Passing to a subsequence we conclude that there exist 
$\gamma_1(t_n),\ldots, \gamma_m(t_n)\subset\Omega$ such that
  $$\Ext_{X_1(t_n)}(\gamma_i(t_n))\to 0$$ and such that the extremal lengths of all other curves are bounded away from $0$.  

Again, passing to a subsequence,  we can apply Theorem~\ref{thm:limits} to find $\epsilon_0>0$ such that  for $n$ sufficiently large,  there is a nonempty collection $\{Y(t_n)\}$ of disjoint
 $(\epsilon,\epsilon_0)$ thick subsurfaces contained in $\Omega$. We can assume that each $Y(t_n)$ is either a flat annulus or it does not contain a flat annulus isotopic to a boundary component.    There is a uniform bound $N$ for the number of these surfaces. 
Let $f_{t_n}:X_1\to X_1(t_n)$ denote the corresponding Teichmuller map. 

Assume first that $Y(t_n)$ is not a flat cylinder.
Then by passing to a subsequence, we can assume $Y(t_n)$  converges to a limiting punctured surface $Y_\infty$;
the corresponding  $q_1(t_n)$ converges to a limiting $q_{1,\infty}$ on  
$Y_\infty$.
Thus for   any  neighborhood $U$ of the punctures on $Y_\infty$, 
letting $K:=Y_\infty\setminus U$,  
\begin{enumerate}
\item for large enough $n$. there is a conformal map $F_n:K\to Y(t_n)$
\item $F_n^*q_1(t_n)\to q_{1,\infty}$ as $t_n\to\infty$, uniformly on $K$.
\end{enumerate} 

For each such $U$, for $n$ large enough,  the curves $\gamma_i(t_n)$ whose lengths are approaching $0$ satisfy  $$\gamma_i(t_n)\cap F_n(K)=\emptyset.$$ Since $Y(t_n)$
does not contain a flat cylinder in the homotopy class of a component of $\partial Y(t_n)$, we may find $U$ large enough so that
\begin{equation}
\label{eq:small}
\Area_{q_1(t_n)}(Y(t_n)\setminus F_n(K))\leq \epsilon/2.
\end{equation}



Now, since $\nu_i,\nu_j$ are mutually singular measures, 
 there exists $\delta>0$  and  a  finite set $\mc I$ of horizontal transversals $I$ to the vertical foliation in $\Omega$ such that for any $\nu_i,\nu_j, 
i\neq j$ there is a transversal $I_{i,j}\in \mc I$ such that  
\begin{equation}
\label{eq:far}
|\nu_i(I_{i,j})-\nu_j(I_{i,j})|>\delta.
\end{equation} 

 Let $\Lambda_i$ be the set of generic points for $\nu_i$ and the transversals  $\mc I$ ;  that is,   
 $\Lambda_i$ consists of the set of points $x$ such that, if $l_T(x)$ is the vertical leaf segment of $F^v_{q_1}$ through  $x$ of length $T$, then for each $I\in \mc I$
\begin{equation}
\label{eq:converge}
\lim_{T\to \infty}\frac{1}{T} \text{card}(l_T(x)\cap I)=\nu_i(I).
\end{equation}
  The sets $\Lambda_i$ are pairwise disjoint.  With respect to the measure $\nu_i$,  on every transversal  almost every point  belongs to $\Lambda_i$, and, with respect to the area element defined by $q_1$, almost every point   in $\Omega$ belongs to $\cup_{i=1}^p \Lambda_i$.
Let $\Lambda_i(t_n)=f_{t_n}(\Lambda_i)$.

We claim that for $n$ big enough the following holds. Let  $R$ be a coordinate rectangle with respect to the flat structure of $q_{1,\infty}$ (i.e. sides are vertical and horizontal) that is contained in 
$K$.   Then there does {\em not} exist a pair of indices $j\neq i$  and   points $y_{n,i}\in f_{t_n}(\Lambda_i), y_{n,j}\in f_{t_n}(\Lambda_j); i\neq j$ 
such that 
$$z_{n,i}:=F_n^{-1}(y_{n,i})\in R, \ \ z_{n,j}:=F_n^{-1}(y_{n,j})\in R.$$
For suppose there were points with this property. There is a coordinate rectangle $R'\subset R$ whose vertical sides $L_i,L_j$ have endpoints at $z_{n,i},z_{n,j}$.  For every horizontal segment $H$ of $q_{1,\infty}$ 
\begin{equation}\left |\text{card}(H\cap L_i)-\text{card}(H\cap L_j)\right | \leq 2.\nonumber \end{equation}

Let $L_{i,n},L_{j,n}$ be the vertical leaf segments of $q_1(t_n)$ through $y_{i,n},y_{j,n}$ of the same length such that $F_n^{-1}(L_{i,n})$  converges to $L_i$ and similarly with $L_{j,n}$.   As $t_n\to\infty$, since the length of  $I_n=f_{t_n}(I_{i,j})$ goes to infinity, we have  
\begin{equation}\frac{1}{\text{card}(L_{i,n}\cap I_n)}|\text{card}(L_{i,n}\cap I_n)-\text{card}(L_{j,n}\cap I_n))|
\to 0.\nonumber\end{equation}  Mapping $L_{i,n}$ back to $X_1$ by $f_{t_n}^{-1}$, using the fact that $$\frac{\text{card}(f_{t_n}^{-1}(L_{i,n})\cap I_{i,j})}{|f_{t_n}^{-1}(L_{i,n})|}$$ is bounded,  we then have for all large $n$, 
\begin{equation}\frac{1}{|f_{t_n}^{-1}(L_{i,n})|}|\text{card}(f_{t_n}^{-1}(L_{i,n})\cap I_{i,j})-\text{card}(f_{t_n}^{-1}(L_{j,n})\cap I_{i,j})|
\leq\delta/2\nonumber\end{equation} 
and  we have a contradiction to (\ref{eq:far}) and (\ref{eq:converge}).
Thus for each rectangle $R$, there is some  $i=i(R)$ such that for all $j\neq i$ and for all $x\in R$  we have 
\begin{equation}
\label{eq:smallj}
\chi_{F_n(R)}(F_n(x))\chi_{\Lambda_j(t_n)}(F_n(x))\to 0.
\end{equation}

Now we take a covering of $K$ by such rectangles.  If any two rectangles $R,R'$ overlap then $i(R)=i(R')$. It follows from the connectedness of $K$, that there is a single $i$ such that for all $R$, $i(R)=i$.  Thus for $n$ large enough,
for all $j\neq i$, (\ref{eq:smallj}) holds.   
From this it follows that for $n$ large enough,   for $j\neq i$ 
\begin{equation}
\label{eq:disjoint}
\int_{Y(t_n)}d\nu_j|dy|\leq \epsilon
\end{equation} 

We would like to prove an  estimate similar to (\ref{eq:disjoint}) in the case that 
 $Y(t_n)$ is a flat cylinder.  To do that  we need
 a uniform version of generic points.  The reason for that is that there is no natural limiting surface in the case of flat cylinders, and so the previous argument does not quite work.    For each $T_0$, let 
$\Lambda^{T_0}_i$ consist of those $x\in\Lambda_i$ such that for $T\geq T_0$, for each $I\in \mc I$, 
\begin{equation}
\label{eq:T_0}
|\frac{1}{T} \text{card}(l_T(x)\cap I)-\nu_i(I)|<\delta/2.
\end{equation}
Choose   $T_0$ so that with respect to  the measure $d\nu_i$,  except for a set of measure  at most $\epsilon$, every point of $I$ belongs to 
$\Lambda^{T_0}_i$. 

Now   
suppose $Y(t_n)$ is a flat cylinder with core curve $\beta_n$. 
Set $B=a_1(1-N\epsilon)$ and let $\epsilon_0<1$ be a constant such that Lemma~\ref{lem:annulus} holds. 
 Since $f_{t_n}$ is area preserving,  without loss of generality, for $n$ large enough,  we can also assume that at time $t_n$, the core curve  $\beta_n$ is $(q_1(t_n),2)$-almost vertical and has length smaller than  $\epsilon_0$. 
 This means that we can fit coordinate rectangles inside $Y(t_n)$ with
 vertical sides of length at least  $\frac{1}{2}|\beta_n|_{q_1(t_n)}$.    We can choose $n$ large enough so that 
$\frac{1}{2}|\beta_n|_{q_1(t_n)}e^{t_n}\geq T_0$.  This means that if we pull back the vertical segment to $X_1$ its length is at least $T_0$ so that we can apply (\ref{eq:T_0}).  Then the same argument given previously shows that for  all but at most one $j$
$$ f_{t_n}(\Lambda_j^{T_0})\cap Y(t_n)=\emptyset.$$ Otherwise, for a pair $i\neq j$,  we can again find a coordinate rectangle contained in the cylinder  with one vertical side passing through a  point of $f_{t_n}(\Lambda_i^{T_0})$ and the other vertical side passing through a  point of 
$f_{t_n}(\Lambda_j^{T_0})$, and we find $|\nu_i(I)-\nu_j(I)|<\delta$, and again we have a contradiction. Thus we conclude that for $n$ large enough, for all but  one $j$, for any horizontal segment crossing $Y_1(t_n)$ we have \begin{equation}\label{eq:cylinders}
\nu_j(I)<\epsilon.
\end{equation}
Then except for all but at most one index $j$, (\ref{eq:disjoint}) also holds for flat cylinders.

Now    let $Z_1(t_n)$ be the union of those  $Y(t_n)$ such that (\ref{eq:disjoint}) 
holds for the index $j=1$.   Then 
$Z_1(t)\subsetneq \Omega$ for otherwise 
we would have  $$\int_\Omega d\nu_1|dy|<N\epsilon,$$ contradicting 
(\ref{eq:normalized}), for $\epsilon$ sufficiently small.
   Let $Y_1(t_n)=\Omega\setminus Z_1(t_n)$, and so we have 
$$\Area_{q_1(t_n)}(Y_1(t_n))\geq a_1\int_{Y_1(t_n)}d\nu_1|dy|\geq 
a_1(1-N\epsilon)=B.$$ This proves (ii).

We prove (iii). Again the issue is to compare surfaces in different metrics. 
In the case that $Y_1(t_n)$ is not a flat cylinder, 
let $\alpha$ be any  closed geodesic of $q_{1,\infty}$  in $Y_\infty$. 
By (\ref{eq:smallj}), for $j\neq 1$, 
$$\int_{F_n(\alpha)}d\nu_j(t_n)\to 0$$
where $\nu_j(t_n)$ is the push forward measure of $\nu_j$ under $f_{t_n}$. 
But this is then also  true for the geodesic in the class of $F_n(\alpha)$.  
Then (iii) holds by  Lemma~\ref{lem:everyvert}. 
If $Y_1(t_n)$ is a flat annulus we know that for $j\neq 1$, the $\nu_j(t_n)$ measure of any horizontal segment crossing $Y_1(t_n)$ is bounded by $\epsilon$.
We now apply Lemma~\ref{lem:annulus} to give the desired bound on the area.

\end{proof}

\begin{proof}  [Proof of Theorem \ref{main}]
We begin by assuming that the horizontal foliations of $q_1$ and $q_2$ coincide. 
Without loss of generality we can assume 
that  for some minimal component $\Omega$ we have $a_1>0$ and $b_1=0$. 
By (\ref{eq:Kerck}) it suffices to show that for any $M>0$ 
and for any sequence  $t_n\to \infty$,   for $t_n$ sufficiently large 
there is a simple closed curve $\gamma(t_n)$ with 
\begin{equation}\label{big}
\frac{\Ext_{X_2(t_n)}(\gamma(t_n))}{\Ext_{X_1(t_n)}(\gamma(t_n))}>M.
\end{equation}
For all $\e>0$ small  
we now apply  
Proposition~\ref{subsurface}.   We find a fixed constant $B$  such that 
for $t_n$ sufficiently large,  the subsurface $Y(t_n)$, given by that Proposition, 
satisfies 
$$\Area_{q_1(t_n)}(Y(t_n))>B$$ $$\Area_{q_2(t_n)}(Y(t_n))<\e$$ 
and $$\Ext_{X_1(t_n)}(\partial Y(t_n))\leq \epsilon/M.$$
If $|\partial Y(t_n)|_{q_2(t_n)}\geq \sqrt{\epsilon}$, then $\Ext_{X_2(t_n)}
(\partial Y(t_n))\geq \epsilon$, and we are done; we may choose $\gamma(t_n)=\partial Y(t_n)$. 
Thus assume $$|\partial Y(t_n)|_{q_2(t_n)}<\sqrt\epsilon.$$ 

If $Y(t_n)$ is not a flat cylinder,  for $\epsilon$ small enough, we can apply 
Lemma~\ref{vertical} to find a bounded length $(q_1(t_n),\delta)$-almost vertical curve $\gamma_n\subset Y(t_n)$ and then  Lemma~\ref{area}, which says that $\gamma_n$  has the desired property \eqref{big}.   

Thus assume  
  $Y(t_n)$ is a flat cylinder.  Let $\beta_n$ be a core curve of $Y(t_n)$ with $t_n$ chosen so that  $|\beta_n|_{q_1(t_n)}<\e$. Fix some $\delta_0>0$.  Suppose first that $\beta_n$ is $(q_1(t_n),\delta_0)$-almost vertical.  The reciprocal of the modulus of the cylinder is an upper bound for $\Ext_{X_1(t_n)}(\beta_n)$, and we have
\begin{equation}\label {t1}
\Ext_{X_1(t_n)}(\beta_n)\leq \frac{|\beta_n|^2_{q_1(t_n)}}{\Area_{q_1(t_n)}(Y(t_n))}\leq \frac{|\beta_n|^2_{q_1(t_n)}}{B}
\end{equation}
 We now want to estimate $\Ext_{X_2(t_n)}(\beta_n)$. The assumption that $\beta_n$ is $(q_1(t_n),\delta_0)$-almost vertical, since the vertical lengths coincide, implies by  \eqref{v3} that  
$$|\beta_n|_{q_2(t_n)}>\frac{\delta_0|\beta_n|_{q_1(t_n)}}{(1+\delta_0)}.$$ 

Now  there is an annulus $A(t_n)$ which is a union of the flat annulus 
$Y(t_n)$ and an expanding annulus $Y'(t_n)$.
By Proposition~\ref{subsurface} the $q_2(t_n)$-area of $Y(t_n)$ is bounded by $c\epsilon$  for some fixed $c>0$.  The  extremal length of a (homotopy class) curve and the hyperbolic length are asymptotically equal as the quantities go to $0$. Then by Lemma~\ref{lem:ext} there are  
constants $c',c''>0$ depending on $c$ and $\delta_0$ but independent of $\epsilon$  and $t_n$ such that 
$$\Ext_{X_2(t_n)}(\beta_n)\geq c'\ell_{X_2(t_n)}(\beta_n)\geq\frac{c'}{\Mod(Y(t_n))+\Mod(Y'(t_n))}\geq $$
$$\frac{c'}{\frac{\Area_{q_2}(Y(t_n))}{|\beta_n|^2_{q_2(t_n)}}-\log |\beta_n|_{q_2(t_n)}}\geq 
\frac{c''}{\frac{\epsilon}{|\beta_n|_{q_1(t_n)}^2}-\log |\beta_n|_{q_1(t_n)}}.$$
Comparing with  \eqref{t1} we see that for $\epsilon$ small enough, $\beta_n$ is a curve that satisfies \eqref{big}.

Suppose now  the core curve $\beta_n$ of $Y(t_n)$ is not $(q_1(t_n),\delta_0)$ almost vertical.  We can find a smaller time at which the slope of the cylinder is at least $1$. This means we can apply Lemma~\ref{lem:annulus} to find a corresponding cylinder with respect to the metric $q_2(t_n)$. If $Y(t_n)$ is nonseparating, choose a nontrivial  isotopy class of arcs  in the complement of $Y(t_n)$ joining the top and bottom of $Y(t_n)$. If $Y(t_n)$ is separating, choose two nontrivial isotopy classes, one that joins the top of $Y(t_n)$ to itself and the other which joins the bottom to  itself. These families can be chosen to lie in the thick part of the surface $X_1(t_n)$ and  as such have extremal length bounded independently of $t_n$. In the first case we also take a family  of arcs  $\alpha_n$ crossing $Y(t_n)$ that intersect any vertical arc crossing $Y(t_n)$ at most once.  
In the second case we take a pair of (families of) such arcs crossing $Y(t_n)$. These arcs are $\delta$-almost vertical with some uniform constant
$\delta$. 
Now we can form a closed curve $\gamma_n$ as a concatenation of an arc outside $Y(t_n)$ and an arc $\alpha_n$, or, in the separating case, a pair of arcs outside and a pair of arcs crossing. By Theorem~\ref{minsky},  for some constant $c'$   
\begin{equation}
\label{eq:notvertical}
\Ext_{X_1(t_n)}(\gamma_n)\leq c'\Ext_{X_1(t_n)}(\alpha_n)= c'\frac{\inf|\alpha_n|^2_{q_1(t_n)}}{\Area_{q_1(t_n)}(Y(t_n))}\leq 
c'\frac{\inf|\alpha_n|^2_{q_1(t_n)}}{B}
\end{equation}
We consider the corresponding arcs $\alpha_n$ crossing the cylinder with respect to $q_2(t_n)$; defined so that they   intersect the vertical arcs crossing the cylinder at most once. (These arcs may intersect the arcs perpendicular to the core curve $\beta_n$ many times). Their lengths are comparable to the lengths with respect to the metric $q_1(t_n)$.  The corresponding curves $\gamma_n$ formed this way are longer than the arcs $\alpha_n$ crossing the cylinder.  We have for some constant $\delta'$ depending on $\delta$,   
$$\Ext_{X_2(t_n)}(\gamma_n)\geq\Ext_{X_2(t_n)}(\alpha_n)
=\frac{\inf|\alpha_n|^2_{q_2(t_n)}}{\Area_{q_2(t_n)}(Y(t_n))}\geq 
\frac{\inf\delta'|\alpha_n|^2_{q_1(t_n)}}{c\epsilon}.$$ Comparing to (\ref{eq:notvertical}) 
we are done for $\epsilon$ small enough. We have proven the theorem in the case that the horizontal foliations coincide. 

Now consider the 
 general case where the horizontal foliations of $q_1$ and $q_2$ are distinct. Pick a quadratic differential $q_3$ with the same vertical foliation as $q_1$ and the same horizontal foliation as $q_2$.    The rays determined by $q_3$ and $q_2$ diverge by what was already proved. The rays determined by $q_1$ and $q_3$ stay bounded distance apart  as a special case of Ivanov's result \cite{I}.

\end{proof}


\begin{proof}  [Proof of Theorem \ref{top}]
Denote the foliations simply  by $F_1,F_2$.
The first case  is if the minimal components, if any, coincide.  
Since the foliations are not topologically equivalent, and yet have $0$ intersection number, there must be some curve 
$\beta$ which is a core curve of a flat cylinder with respect to  one quadratic
differential, say $q_1$, but is not the core curve of a flat cylinder
of  $q_2$.  Since $\beta$  is isotopic to the core curve of a flat annulus of $q_1$ we have 
  $$\Ext_{X_1(t)}(\beta)\leq  ce^{-2t},$$ for some $c$.
Since $\beta$ is not a subset of a minimal component of
$F_2$, we must have  
$\beta\subset \Gamma_{q_2}$, the critical graph of $q_2$.  Now the length of $\beta$ in the metric of $q_2(t)$ satisfies 
$|\beta|_{q_2(t)}=|\beta|_{q_2}e^{-t}\to 0$.
Now $\beta$ determines an expanding annulus. We  apply the upper  bound for the modulus of that annulus as given in 
Lemma~\ref{lem:ext} and hence the lower bound for extremal length to say that $$\frac{\Ext_{X_2(t)}(\beta)}{\Ext_{X_1(t)}(\beta)}\to\infty.$$

 The second 
case is if one of the foliations, say  $F_1$, has a minimal component
$\Omega_1$ 
which is not a minimal component of $F_2$. Since $i(F_1,F_2)=0$,
every curve $\beta\subset \Omega_1$ satisfies
$i(F_2,\beta)=0$, so that  $\beta\subset \Gamma_{q_2}$, the
critical graph of $q_2$. 
Since $\beta\subset\Omega_1$, we have  $h_{q_1}(\beta)>0$ and so the  flat
length  of $\beta$ with respect to $q_t$ satisfies
$$|\beta|_{q_1(t)}\geq h_{q_1}(\beta)e^t.$$
This gives $$\Ext_{X_1(t)}(\beta)\geq h_{q_1}^2(\beta)e^{2t}.$$
It suffices to find an upper bound for $\Ext_{X_2(t)}(\beta)$. 
Now  $\beta$ is either on the boundary of a minimal component $\Omega_2$ of $F_2$ or is on the boundary of a flat cylinder.   In either case it determines a maximal  expanding annulus $A$.    Since $\Omega_2$ is minimal, the shortest saddle connection $\gamma(t)$ contained in $\Omega_2$  satisfies $|\gamma(t)|_{q_2}\to\infty$ as $t\to\infty$ and hence 
$$e^t|\gamma|_{q_2(t)}\to \infty.$$  This means that for a constant $c>0$, 
$d(A)\geq ce^{-t}$.  Since  $|\beta|_{q_2(t)}\asymp e^{-t}$, by  
 Lemma~\ref{lem:ext} the modulus $A$  is bounded below and so the extremal length of $\beta$ is bounded above. 
\end{proof}


\begin{proof}[Proof of Theorem \ref{approx}]
We note that each $\gamma^j_n$ may itself be a multicurve. 
Fix a finite set of curves $\alpha_1\ldots, \alpha_N$ such that the intersection of any measured foliaiton with these curves determines the foliation. 
Choose a  unit area quadratic differential $q$ on some surface $X$ whose vertical foliation is $[F,\sum_{i=1}^p\nu_i]$. Denote by $|dy|$ the measure on the corresponding horizontal foliation.  Let $X(t)$ be the corresponding ray.    For any sequence of times $t_n\to\infty$ by Proposition~\ref{subsurface} there is $B>0$ and a collection of disjoint domains $Y_1(t_n),\ldots, Y_p(t_n)$ such that the area of $Y_i(t_n)$ with respect to the measure $d\nu_i|dy|$ is at least $B$.   Suppose first that $Y_i(t_n)$ is not a cylinder. By the first part of Lemma~\ref{vertical} we may pick a 
$(q(t_n),\delta)$  almost vertical curve $\gamma_i(t_n)\subset Y_i(t_n)$  of  length at most $D$.   We claim that $\ga_i(t_n)\to [F,\nu_i]$. As before, let $\Lambda_i(t_n)$ be the image of the  generic points inside $Y_i(t_n)$; generic  with respect to the transversals for the set of $\alpha_i$.  The generic points are dense, and $\gamma_i(t_n)$ is a union of a bounded number of saddle connections, so  we can find a bounded collection $\{\omega_j(t_n)\}_{j=1}^m$ of vertical segments beginning at generic points satisfying the second conclusion of  Lemma~\ref{vertical}.  By construction
 of the $\omega_j(t_n)$,  for any fixed $\alpha_k$,
 \begin{equation}
\label{eq:approx}
\frac{i(\gamma_i(t_n),\al_k)}{\sum_{j=1}^m
   i(\omega_j(t_n),\al_k)}\to 1.
\end{equation} Since $\omega_j(t_n)$ is a vertical
 segment through a generic point, 
 $$\frac{\text{card}(\omega_j(t_n)\cap \al_k)}{|\omega_j(t_n)|_{q(0)}}\to \nu_i(\alpha_k).$$ Summing  over all $1\leq j\leq m$ we have
$$\frac{i(\gamma_i(t_n),\al_k)}{\sum_{j=1}^m|\omega_j(n)|_{q(0)}}\to \nu_i(\alpha_k).$$
However $$\frac{\sum_{j=1}^m|\omega_j(n)|_{q(0)}}{v_{q(0)}(\gamma_i(t_n))}\to 1,$$ and so if we let  $s_n=\frac{1}{v_{q(0)}(\gamma_i(t_n))}$
then we have for each $k$, $$\lim_{n\to\infty}s_ni(\gamma_i(t_n),\alpha_k)\to \nu_i(\alpha_k)$$ and we are done.

Finally suppose $Y_i(t_n)$ is a flat cylinder with core curve $\gamma_i(t_n)$.  As in the proof of Proposition~\ref{subsurface} we can assume that $t_n$ is chosen so that $\gamma_i(t_n)$ is  $(q(t_n),2)$-almost vertical. As in that argument we find a dense set of generic points $\Lambda_i^{T_0}$, generic for 
the transversals to the $\alpha_i$. We then can find vertical segments $\omega_j(t_n)$ through generic points such that (\ref{eq:approx}) holds and the rest of the proof is the same.  
\end{proof}


\noindent
Anna Lenzhen:\\
Dept. of Mathematics\\
University of Michigan\\
Ann Arbor, Michigan, 48109\\
E-mail: alenzhen@umich.edu
\medskip

\noindent
Howard Masur:\\
Dept. of Mathematics, University of Chicago\\
Chicago, IL 60637\\
E-mail: masur@math.uchicago.edu

\end{document}